\documentclass[11pt]{article}

\usepackage[T1]{fontenc}
\usepackage[utf8]{inputenc}
\usepackage{amsmath,amssymb,amsthm,amsfonts,mathtools}
\usepackage{mathrsfs}
\usepackage[margin=1in]{geometry}
\usepackage{enumitem}
\usepackage{hyperref}

\hypersetup{
  colorlinks=true,
  linkcolor=blue,
  citecolor=blue,
  urlcolor=blue
}

\numberwithin{equation}{section}

\theoremstyle{plain}
\newtheorem{theorem}{Theorem}[section]
\newtheorem{proposition}[theorem]{Proposition}
\newtheorem{lemma}[theorem]{Lemma}
\newtheorem{corollary}[theorem]{Corollary}

\theoremstyle{definition}
\newtheorem{definition}[theorem]{Definition}

\theoremstyle{remark}
\newtheorem{remark}[theorem]{Remark}

\newcommand{\Vol}{\operatorname{Vol}}
\newcommand{\Tr}{\operatorname{Tr}}

\newcommand{\PSL}{\mathrm{PSL}}
\newcommand{\SL}{\mathrm{SL}}
\newcommand{\SO}{\mathrm{SO}}

\newcommand{\HH}{\mathbb H}

\newcommand{\R}{\mathbb R}
\newcommand{\Z}{\mathbb Z}

\newcommand{\Hor}{\mathcal H}
\newcommand{\Lcal}{\mathcal L}
\newcommand{\Mcal}{\mathcal M}
\newcommand{\Rcal}{\mathcal R}

\newcommand{\Fcal}{\mathcal F}

\newcommand{\ii}{\mathrm i}
\newcommand{\ee}{\mathrm e}

\newcommand{\detF}{\det\nolimits_{\!F}}
\renewcommand{\Re}{\operatorname{Re}}

\DeclareMathOperator{\arccosh}{arccosh}
\DeclareMathOperator{\Arg}{Arg}

\title{Sub-Riemannian Selberg Trace Formulae for Compact Quotients of $\SL(2,\mathbb R)$ and Determinants of Sub-Laplacians}

\author{Fabrice Baudoin\thanks{Research partially supported by grant 10.46540/4283-00175B from Independent Research Fund Denmark and by the Villum Investigator grant \emph{Stochastic Analysis in Aarhus}. I also acknowledge funding from the European Research Council (ERC) under the European Union's Horizon Europe research and innovation programme (RanGe project, Grant Agreement No. 101199772).}}
\date{\today}

\begin{document}

\maketitle

\begin{abstract}
We prove sub-Riemannian Selberg trace formulae for compact quotients of
\(\SL(2,\mathbb R)\).
Using the Fourier decomposition along the \(\SO(2)\)-fibers, we reduce
the heat trace computation to the Selberg trace formula for Maass
Laplacians on the hyperbolic plane.  The resulting formula has an
identity contribution and a hyperbolic contribution, the latter involving
a theta factor that records both the twisting character and the central
sign of the chosen lift to \(\SL(2,\mathbb R)\).  We then use
this trace formula to compute the zeta-regularized determinant of the
sub-Laplacian.  The determinant formula is expressed as a universal factor
depending only on the base hyperbolic surface times an explicit
lift- and character-dependent Selberg-type product.
\end{abstract}

\tableofcontents

\section{Introduction}
\label{sec:intro}

The Selberg trace formula is a fundamental bridge between spectral geometry
and the dynamics of closed geodesics.  For a compact hyperbolic surface
\(M=\bar\Gamma\backslash\HH\), it relates the spectrum of the
Laplace--Beltrami operator to the length spectrum of \(M\), or equivalently
to the hyperbolic conjugacy classes of \(\bar\Gamma\).  In its heat-kernel
form, the trace formula expresses the heat trace as the sum of an identity
contribution and a contribution over closed geodesics.  This form is
particularly well adapted to determinant questions, since zeta-regularized
determinants are obtained from Mellin transforms of reduced heat traces.
We refer to \cite{Hejhal,McKean1972,McKean1974,Patterson} for the classical
theory and for the relation with the Selberg zeta function. The motivation for the present work is to understand to what extent the
classical spectral geometry of compact hyperbolic surfaces admits a natural
sub-Riemannian analogue on compact quotients of \(\SL(2,\mathbb R)\). The sub-Riemannian structure arising from the horizontal distribution of the fibration
\(\SL(2,\mathbb R)\to\mathbb H\) was  thoroughly studied by Bonnefont
\cite{Bonnefont} (see also \cite{Wang}) and compact quotients of \(\SL(2,\mathbb R)\) carry natural hypoelliptic
sub-Laplacians.  However, comparatively little is known about explicit
Selberg-type trace formulae and determinant formulae for these hypoelliptic
operators.  The aim of this paper is therefore to develop a parallel picture
in this sub-Riemannian setting: to derive a heat trace formula for the
sub-Laplacian, to identify the closed-geodesic contribution, and to compute
the corresponding zeta-regularized determinant in terms of base-surface
spectral data and an explicit lift- and character-dependent Selberg-type
product.

More precisely, we consider
\[
        G=\SL(2,\R),\qquad K=\SO(2).
\]
Let \(X,Y,Z\) be the basis of the Lie algebra \(\mathfrak{sl}(2,\R)\)
satisfying
\[
        [X,Y]=Z,\qquad [X,Z]=Y,\qquad [Y,Z]=-X.
\]
Denoting the corresponding left-invariant vector fields by the same letters,
we consider the horizontal left-invariant distribution
\[
        \Hor=\operatorname{span}\{X,Y\}.
\]
The intrinsic sub-Laplacian is
\[
        \Lcal=X^2+Y^2.
\]
Unlike the Laplace--Beltrami operator on a Riemannian manifold, \(\Lcal\)
is not elliptic.  It is nevertheless hypoelliptic by H\"ormander's
bracket-generating condition, and on compact quotients it has discrete
spectrum.

Let \(\Gamma<G\) be a cocompact discrete subgroup whose projection
\(\bar\Gamma\subset\PSL(2,\R)\) is torsion-free and contains no elliptic
elements, and assume that \(\Gamma\cap K=\{I\}\).  Then
\[
        M=\bar\Gamma\backslash\HH
\]
is a compact hyperbolic surface.  For a character
\[
        \chi:\Gamma\to \R/2\pi\Z,
\]
we let \(\Gamma\) act on \(G\) by
\[
        \gamma\cdot_\chi A=\gamma A R(\chi(\gamma)),
\]
where \(R(\chi(\gamma))\) denotes the rotation of angle \(\chi(\gamma)\).
We denote the corresponding compact quotient by
\[
        P^{\Gamma,\chi}:=\Gamma\backslash_\chi G.
\]
Since \(\Lcal\) is left-invariant and right \(K\)-invariant, it descends to
an operator \(\Lcal_{\Gamma,\chi}\) on \(P^{\Gamma,\chi}\).  Our first main
result is a Selberg trace formula for the heat operator $\ee^{t\Lcal_{\Gamma,\chi}} $.
More precisely, if
\[
        0=\lambda_0^\chi\leq \lambda_1^\chi\leq\lambda_2^\chi\leq\cdots
\]
are the eigenvalues of \(-\Lcal_{\Gamma,\chi}\), repeated with multiplicity,
then Theorem~\ref{thm:twisted-sr-selberg-sl2} gives the sub-Riemannian
Selberg trace formula
\[
        \sum_{j=0}^\infty \ee^{-t\lambda_j^\chi}
        =
        \mathcal I_\chi(t)+\mathcal H_\chi(t),
        \qquad t>0.
\]
The identity contribution is independent of \(\chi\) and is given by
\[
\mathcal I_\chi(t)
=
\frac{\pi\Vol(M)\ee^{-t/4}}{4t^2}
\sum_{k\in\Z}
\frac{
\exp\!\left(-\frac{4k^2\pi^2}{t}\right)
}{
\cosh^2\!\left(\frac{2|k|\pi^2}{t}\right)
},
\]
whereas the hyperbolic contribution is
\[
\mathcal H_\chi(t)
=
\frac{\ee^{-t/4}}{4\sqrt{\pi t}}
\sum_{[\gamma]\in\mathcal P_{\bar\Gamma}}
\sum_{r=1}^{\infty}
\Theta_{\Gamma,\chi}(t;\gamma^r)
\frac{\ell(\gamma)}
{\sinh\!\left(\frac{r\ell(\gamma)}2\right)}
\exp\!\left(-\frac{r^2\ell(\gamma)^2}{4t}\right).
\]
Here \(\mathcal P_{\bar\Gamma}\) is the set of primitive hyperbolic conjugacy
classes, with \([\gamma]\) and \([\gamma^{-1}]\) counted separately,
\(\ell(\gamma)\) is the length of the corresponding closed geodesic,
and, if \(\widetilde\gamma\in\Gamma\) is the lift of a primitive
representative, 
\[
        \Theta_{\Gamma,\chi}(t;\gamma^r)
        =
        \sum_{m\in\Z}
        \ee^{-m^2t/4}
        \ee^{-\ii mr(\chi(\widetilde\gamma)+\kappa_\Gamma(\widetilde\gamma))}
\]
is the fiberwise theta factor.  Here
\(\kappa_\Gamma(\widetilde\gamma)\in\{0,\pi\}\) records whether
\(\widetilde\gamma\) is conjugate in \(\SL(2,\R)\) to the positive or
negative lift of the standard hyperbolic diagonal element.  Thus the trace formula has the same
identity/hyperbolic structure as the classical Selberg heat trace formula in
\cite{McKean1972}, but the closed-geodesic contribution is weighted by an
additional fiberwise theta series.

The proof uses the Fourier decomposition along the \(K=\SO(2)\)-fibers.  In
Iwasawa coordinates, the \(m\)-th Fourier mode of the sub-Laplacian is
identified with a Maass Laplacian of weight \(m\).  This reduces the orbital
integrals for the sub-Riemannian heat kernel to the corresponding orbital
integrals in the Selberg trace formula for Maass heat kernels.  The required
heat-kernel input on \(\SL(2,\R)\), which yields the identity contribution,
is taken from Bonnefont's heat kernel formula \cite{Bonnefont}, while the hyperbolic
orbital integrals are computed using the Maass heat trace formula of
Ikeda--Matsumoto \cite{IkedaMatsumoto}.  Since the base surface is compact
and the group is torsion-free, there are no parabolic or elliptic
contributions.

The second main result of the paper is the computation of the
zeta-regularized determinant of the positive subelliptic operator
\(-\Lcal_{\Gamma,\chi}\).  Zeta-regularized determinants of Laplacians are
spectral invariants which package the full non-zero spectrum into a single
geometric quantity.  They arise naturally from the Mellin transform of the
reduced heat trace.  In Riemannian geometry, such determinants play a
central role in the study of analytic torsion, following the work of Ray and
Singer \cite{RaySinger1971}, and in the spectral geometry of surfaces, where
they are closely connected with the Selberg zeta function and the length
spectrum \cite{Sarnak1987,Hejhal,Patterson}.  They also vary non-trivially
with the metric and have been used as refined spectral functionals on moduli
spaces of surfaces \cite{OsgoodPhillipsSarnak1988}.  In the sub-Riemannian
and contact settings, zeta functions and determinants of sub-Laplacians have
been studied explicitly on several model spaces and nilmanifolds
\cite{BauerFurutani,BauerFurutaniIwasaki,BauerFurutaniIwasakiPseudoH}, but
explicit determinant formulae of the type considered here remain scarce.

After removing the simple zero eigenvalue, we define the zeta function of
\(-\Lcal_{\Gamma,\chi}\) by
\[
        \zeta_{\Gamma,\chi}(s)
        =
        \sum_{j=1}^\infty (\lambda_j^\chi)^{-s},
        \qquad \Re s>2.
\]
The heat trace formula gives the meromorphic continuation of
\(\zeta_{\Gamma,\chi}\) to a neighborhood of \(s=0\), where it is regular,
and we set
\[
        \det\nolimits_\zeta' (-\Lcal_{\Gamma,\chi})
        :=
        \exp\bigl(-\zeta_{\Gamma,\chi}'(0)\bigr).
\]
Our main determinant result is the formula
\[
        \det\nolimits_\zeta' (-\Lcal_{\Gamma,\chi})
        =
        (4\pi)^2
        \exp\bigl(C_{\mathrm{id}}\Vol(M)\bigr)
        \exp\!\left(4\pi\zeta_M\!\left(-\frac12\right)\right)
        \detF\!\left(I-\ee^{-4\pi\sqrt{\Delta_M'}}\right)^2
        \Rcal_{\Gamma,\chi} .
\]
Here \(C_{\mathrm{id}}\) is a universal constant, \(\Delta_M'\) denotes the
restriction of the Laplace--Beltrami operator of the base surface \(M\) to
the orthogonal complement of the constants, \(\zeta_M\) is the reduced
spectral zeta function of \(\Delta_M\), \(\detF\) is the Fredholm
determinant, and \(\Rcal_{\Gamma,\chi}\) is an explicit Selberg-type
product over closed geodesics that depends on the lift and the twisting
character through their total fiber holonomy. Our determinant formula is closely related in spirit to the classical formulas of D'Hoker and Phong \cite{DHP} for determinants of Laplacians on tensors and spinors over compact hyperbolic Riemann surfaces. In their work, fixed-weight Maass Laplacians give rise to determinants expressed in terms of Selberg zeta values at shifted half-integer points, up to explicit universal factors depending only on the topology and the weight. The present situation is different in that the operator is a hypoelliptic sub-Laplacian on a compact quotient of \( SL(2,\mathbb R) \). The Fourier decomposition along the \(SO(2)\)-fibers produces an infinite family of Maass-type contributions, and the determinant is obtained by regularizing this full tower of contributions. Thus the final formula may in a sense be viewed as a sub-Riemannian analogue of the D'Hoker--Phong determinant formulas, with the lift and character dependence encoded in an explicit closed-geodesic product.

The paper is organized as follows.  In Section~\ref{sec:sl2-structure} we
fix the geometric normalization on \(\SL(2,\R)\), record the relevant
Iwasawa-coordinate identities needed to apply the results of \cite{Bonnefont}, and prove the sub-Riemannian Selberg trace
formula on the twisted compact quotients \(P^{\Gamma,\chi}\).  In
Section~\ref{sec:sub-laplacian-determinant} we use this trace formula to
continue the spectral zeta function, derive the finite-part formula for the
determinant, compute the universal base-surface factor, and isolate the lift
and character dependence as a closed-geodesic product. Finally, in Section \ref{subsec:universal-identity-constant} we give a formula for the universal constant $C_{\mathrm{id}}$.

\section{The sub-Riemannian Selberg trace formula on compact quotients of  \texorpdfstring{$\SL(2,\R)$}{SL(2,R)}}
\label{sec:sl2-structure}

Let
\[
        G=\SL(2,\R),\qquad K=\SO(2).
\]
We use the basis
\[
X=\frac12
\begin{pmatrix}
1&0\\
0&-1
\end{pmatrix},
\qquad
Y=\frac12
\begin{pmatrix}
0&1\\
1&0
\end{pmatrix},
\qquad
Z=\frac12
\begin{pmatrix}
0&1\\
-1&0
\end{pmatrix}
\]
of the Lie algebra $\mathfrak{sl}(2,\R)$.  Then
\begin{equation}\label{eq:lie-relations}
[X,Y]=Z,\qquad [X,Z]=Y,\qquad [Y,Z]=-X.
\end{equation}
The corresponding left-invariant vector fields will be denoted by the same letters.  We endow $G$ with the left-invariant Riemannian metric for which $X,Y,Z$ is an orthonormal frame.  The horizontal distribution is
\[
        \Hor=\operatorname{span}\{X,Y\},
\]
and the intrinsic horizontal sub-Laplacian is
\begin{equation}\label{eq:sub-laplacian-sl2}
        \Lcal=X^2+Y^2.
\end{equation}

Let
\[
R(\theta)=
\begin{pmatrix}
\cos\theta&-\sin\theta\\
\sin\theta&\cos\theta
\end{pmatrix}\in K,
\qquad 0\leq\theta<2\pi,
\]
and, for $z=y_1+\ii y_2$, $y_1 \in \mathbb R, y_2>0$, set
\[
s(z)=
\begin{pmatrix}
\sqrt{y_2}&y_1/\sqrt{y_2}\\
0&1/\sqrt{y_2}
\end{pmatrix}.
\]
The map
\[
        (z,\theta)\longmapsto s(z)R(\theta)
\]
gives global Iwasawa coordinates on $G$.  We can identify $G/K$ with the Poincar\'e upper half-plane
\[
        \HH=\{z=y_1+\ii y_2:y_1\in\R,\ y_2>0\}
\]
with metric $(dy_1^2+dy_2^2)/y_2^2$.  In the coordinates $(y_1,y_2,\theta)$ one can compute that
\[
X
=
 y_2\sin(2\theta)\frac{\partial}{\partial y_1}
+
 y_2\cos(2\theta)\frac{\partial}{\partial y_2}
+
\frac12\sin(2\theta)\frac{\partial}{\partial \theta},
\]
\[
Y
=
 y_2\cos(2\theta)\frac{\partial}{\partial y_1}
-
 y_2\sin(2\theta)\frac{\partial}{\partial y_2}
+
\frac12\cos(2\theta)\frac{\partial}{\partial \theta},
\]
and
\[
        Z=-\frac12\frac{\partial}{\partial\theta}.
\]
Consequently
\begin{equation}\label{eq:Lcal-coordinates}
\Lcal
=
 y_2^2
\left(
\frac{\partial^2}{\partial y_1^2}
+
\frac{\partial^2}{\partial y_2^2}
\right)
+
 y_2\frac{\partial^2}{\partial y_1\partial\theta}
+
\frac14\frac{\partial^2}{\partial\theta^2}.
\end{equation}
A Haar measure we consider and fix as our reference measure throughout the paper is
\begin{equation}\label{eq:sl2-haar}
       d\mu= \frac{dy_1\,dy_2\,d\theta}{y_2^2}.
\end{equation}
This differs by a factor $1/2$ from the Riemannian volume.
\subsection{Preliminary lemmas}
\label{subsec:iwasawa-identities}

For
\[
A=
\begin{pmatrix}
a&b\\
c&d
\end{pmatrix}\in\SL(2,\R),
\]
we write the induced action on $\HH$ as
\[
        Az=\frac{az+b}{cz+d},
\]
and set
\[
        \alpha_A(z)=\arg(cz+d).
\]
We shall use repeatedly the elementary central-sign identity
\begin{equation}\label{eq:alpha-central-sign}
        \alpha_{-A}(z)=\alpha_A(z)+\pi
        \quad\text{in }\R/2\pi\Z.
\end{equation}

\begin{lemma}\label{lem:iwasawa-action}
For every $A\in\SL(2,\R)$ and $z\in\HH$,
\[
        A\,s(z)=s(Az)R(\alpha_A(z)).
\]
\end{lemma}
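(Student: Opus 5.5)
The plan is to use uniqueness of the Iwasawa decomposition, so that only the $K$-component has to be computed. The key facts are that $s(w)\cdot \ii=w$ for every $w\in\HH$ and that the stabilizer of $\ii$ in $\SL(2,\R)$ under the Möbius action is exactly $K=\SO(2)$.

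First, I will show that $A\,s(z)=s(Az)R(\varphi)$ for some angle $\varphi$. Since $(A\,s(z))\cdot\ii=A\cdot(s(z)\cdot\ii)=Az$ and $s(Az)\cdot \ii=Az$, the element $s(Az)^{-1}A\,s(z)$ fixes $\ii$. It therefore lies in $K$, hence equals $R(\varphi)$ for a unique $\varphi\in\R/2\pi\Z$.

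Next, I will pin down $\varphi$ by comparing bottom rows, which is where all the content sits. Write $w=Az$ and $v=\operatorname{Im}w$. The bottom row of $s(w)R(\varphi)$ is
\[
\frac1{\sqrt v}\,(\sin\varphi,\ \cos\varphi).
\]
A direct multiplication shows that the bottom row of $A\,s(z)$ is
\[
\frac{1}{\sqrt{y_2}}\,(c\,y_2,\ c\,y_1+d).
\]
Equating the two gives
\[
(\sin\varphi,\cos\varphi)=\sqrt{v/y_2}\,\bigl(\operatorname{Im}(cz+d),\ \operatorname{Re}(cz+d)\bigr).
\]
The prefactor is strictly positive, and the standard identity $\operatorname{Im}(Az)=y_2/|cz+d|^2$ shows it equals $1/|cz+d|$, consistent with $\sin^2\varphi+\cos^2\varphi=1$. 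Hence $\cos\varphi+\ii\sin\varphi=(cz+d)/|cz+d|$, that is, $\varphi=\arg(cz+d)=\alpha_A(z)$.

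The only real obstacle is the sign and orientation convention: one has to check that the chosen form of $R(\theta)$, with $\sin\theta$ in the lower-left entry, produces $\arg(cz+d)$ rather than $-\arg(cz+d)$. The bottom-row comparison settles this. As a sanity check, the result is compatible with \eqref{eq:alpha-central-sign}, since $-A$ gives $-A\,s(z)=s(Az)R(\alpha_A(z)+\pi)$.
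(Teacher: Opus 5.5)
Your proof is correct and follows the paper's route. Both arguments use the stabilizer of $\ii$ to get $A\,s(z)=s(Az)R(\varphi)$, then identify $\varphi=\arg(cz+d)$ by comparing bottom rows, using $\operatorname{Im}(Az)=y_2/|cz+d|^2$.
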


\begin{proof}
Both sides send $i\in\HH$ to $Az$.  Hence $A s(z)=s(Az)k$ for a unique $k\in K$.  Write $k=R(\alpha)$, $z=y_1+\ii y_2$, and $Az=w_1+\ii w_2$.  Since
\[
        w_2=\frac{y_2}{|cz+d|^2},
\]
comparison of the lower rows gives
\[
(c\sqrt{y_2},(cy_1+d)/\sqrt{y_2})
=
\frac1{\sqrt{w_2}}(\sin\alpha,\cos\alpha).
\]
Thus
\[
        \sin\alpha=\frac{c y_2}{|cz+d|},
        \qquad
        \cos\alpha=\frac{c y_1+d}{|cz+d|},
\]
which is precisely $\alpha=\arg(cz+d)$ modulo $2\pi$.
\end{proof}

The next elementary identity relates the polar and Iwasawa fiber coordinates.  It will be used to compare heat-kernel formulae written in the two coordinate systems.

\begin{lemma}\label{lem:polar-iwasawa-fiber}
Suppose
\[
        M=P R(\zeta),
\]
where $P$ is symmetric positive definite.  If also
\[
        M=s(z)R(\theta),\qquad z=y_1+\ii y_2,
\]
then
\[
        \zeta=\theta+\Arg\bigl((y_2+1)-\ii y_1\bigr)\quad\mathrm{mod}\ 2\pi.
\]
Equivalently,
\[
        \zeta=\theta-\arctan\left(\frac{y_1}{y_2+1}\right)
        \quad\mathrm{mod}\ 2\pi.
\]
\end{lemma}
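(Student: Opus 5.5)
The plan is to use uniqueness of the polar decomposition to reduce the statement to a $2\times 2$ matrix computation. Set $\varphi:=\theta-\zeta$. From $PR(\zeta)=s(z)R(\theta)$ we get
\[
        P=s(z)R(\theta)R(\zeta)^{-1}=s(z)R(\varphi).
\]
So $\zeta$ is determined modulo $2\pi$ once we find the unique angle $\varphi$ for which $s(z)R(\varphi)$ is symmetric positive definite. Existence and uniqueness of such a $\varphi$ follow from the polar decomposition $G=\{\text{SPD}\}\cdot K$. I will nevertheless identify $\varphi$ directly, which also reproves uniqueness in this case.

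First, I multiply out
\[
s(z)R(\varphi)=
\begin{pmatrix}
\sqrt{y_2}\cos\varphi+\frac{y_1}{\sqrt{y_2}}\sin\varphi & -\sqrt{y_2}\sin\varphi+\frac{y_1}{\sqrt{y_2}}\cos\varphi\\[2pt]
\frac{1}{\sqrt{y_2}}\sin\varphi & \frac{1}{\sqrt{y_2}}\cos\varphi
\end{pmatrix}.
\]
Symmetry means the two off-diagonal entries agree. After multiplying by $\sqrt{y_2}$ this reads
\[
        y_1\cos\varphi=(y_2+1)\sin\varphi .
\]
Hence $(\cos\varphi,\sin\varphi)$ is parallel to $(y_2+1,y_1)$, which leaves two candidate angles differing by $\pi$.

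Second, I fix the sign using positive definiteness. The determinant is $1$, so for a symmetric matrix positive definiteness is equivalent to positive trace. The trace equals
\[
        \frac{1}{\sqrt{y_2}}\bigl((y_2+1)\cos\varphi+y_1\sin\varphi\bigr).
\]
This is the inner product of $(\cos\varphi,\sin\varphi)$ with $(y_2+1,y_1)$, scaled by $1/\sqrt{y_2}>0$. Positivity therefore selects the positively proportional direction. Thus
\[
        \varphi=\Arg\bigl((y_2+1)+\ii y_1\bigr)\pmod{2\pi},
\]
and
\[
        \zeta=\theta-\varphi=\theta+\Arg\bigl((y_2+1)-\ii y_1\bigr).
\]

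Finally, since $y_2+1>0$, the complex number $(y_2+1)-\ii y_1$ lies in the open right half-plane. Its principal argument is therefore $-\arctan\bigl(y_1/(y_2+1)\bigr)\in(-\pi/2,\pi/2)$, which gives the second form of the statement.

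There is no real obstacle here. The only point requiring care is the sign choice between the two solutions of the symmetry equation, which the trace criterion (positive trace together with determinant one) settles cleanly.
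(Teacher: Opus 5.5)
Your proof is correct and follows essentially the paper's route: impose symmetry of $P=MR(-\zeta)$ and solve for the angle, where your substitution $\varphi=\theta-\zeta$ simply removes $\theta$ from the algebra. Your use of positive trace together with determinant one to choose between the two solutions differing by $\pi$ also makes explicit a step the paper leaves implicit when it passes from its formula for $\tan\zeta$ to the $\Arg$ form.
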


\begin{proof}
The angle $\zeta$ is determined by the condition that $P=M R(-\zeta)$ be symmetric.  Write
\[
M=
\begin{pmatrix}
\alpha&\beta\\
\gamma&\delta
\end{pmatrix}.
\]
Since
\[
R(-\zeta)=
\begin{pmatrix}
\cos\zeta&\sin\zeta\\
-\sin\zeta&\cos\zeta
\end{pmatrix},
\]
the symmetry condition is
\[
        (\alpha+\delta)\sin\zeta+(\beta-\gamma)\cos\zeta=0.
\]
For $M=s(z)R(\theta)$ one computes
\[
\alpha+\delta=
\frac{(y_2+1)\cos\theta+y_1\sin\theta}{\sqrt{y_2}},
\qquad
\beta-\gamma=
\frac{y_1\cos\theta-(y_2+1)\sin\theta}{\sqrt{y_2}}.
\]
Hence
\[
\tan\zeta=
\frac{(y_2+1)\sin\theta-y_1\cos\theta}
{(y_2+1)\cos\theta+y_1\sin\theta}.
\]
Equivalently,
\[
\begin{aligned}
\zeta
&=\Arg\!\bigl((y_2+1)\cos\theta+y_1\sin\theta
+\ii((y_2+1)\sin\theta-y_1\cos\theta)\bigr)\\
&=\theta+\Arg\bigl((y_2+1)-\ii y_1\bigr)
\quad\mathrm{mod}\ 2\pi.
\end{aligned}
\]
Since $y_2+1>0$, this gives the stated arctangent form.
\end{proof}

\subsection{The heat kernel and the Maass decomposition}
\label{sec:sl2-heat}

Let $p_t(g,h)$ be the heat kernel of $\ee^{t\Lcal}$ on $G$ with respect to the Haar measure \eqref{eq:sl2-haar}.  We write $g=(z,\theta)=s(z)R(\theta)$.

\begin{proposition}[Subelliptic heat kernel on $\SL(2,\R)$]
\label{prop:heat-kernel-sl2}
For every $t>0$, the diagonal value of the heat kernel is independent of $(z,\theta)$ and is given by
\begin{equation}\label{eq:diagonal-heat-kernel-sl2}
p_t((z,\theta),(z,\theta))
=
\frac{\ee^{-t/4}}{8t^2}
\sum_{k\in\Z}
\frac{
\exp\!\left(-\frac{4k^2\pi^2}{t}\right)
}{
\cosh^2\!\left(\frac{2|k|\pi^2}{t}\right)
}.
\end{equation}
For $u,z\in\HH$, $u\neq z$, and $\phi,\theta\in[0,2\pi)$,
\begin{align}
& p_t((u,\phi),(z,\theta))                                      \notag\\
&=
\frac{\ee^{-t/4}}{8\pi^{5/2}t^{3/2}}
\sum_{m\in\Z}
\ee^{-tm^2/4}
\ee^{\ii m(\theta-\phi)}
\left(
\ii\,\frac{\bar u-z}{|\bar u-z|}
\right)^m                                                       \notag\\
&\quad\times
\int_{-\infty}^{\infty}
\ee^{-A(\rho,\eta)^2/t}
\ee^{-m\eta}
\frac{A(\rho,\eta)}
{\sqrt{\cosh^2\rho\,\cosh^2\eta-1}}
\,d\eta,
\label{eq:offdiag-heat-kernel-sl2}
\end{align}
where
\[
        \rho=\frac{d_{\HH}(u,z)}2,
        \qquad
        A(\rho,\eta)=\arccosh(\cosh\rho\,\cosh\eta).
\]
\end{proposition}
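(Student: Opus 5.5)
The plan is to derive both formulas from Bonnefont's heat kernel for the sub-Laplacian on \(\SL(2,\R)\) \cite{Bonnefont}, after translating his normalization into ours. Bonnefont writes the kernel in cylindrical coordinates adapted to the fibration \(\SL(2,\R)\to\HH\): a hyperbolic radial variable \(r=d_\HH\) and a fiber variable. By left invariance it suffices to know \(p_t(e,g)\). To use his formula we must check three things. First, our frame \(X,Y,Z\) with \([X,Y]=Z\) has to match his scaling; the factor \(1/2\) in the matrices gives base curvature \(-1\) for \(\HH\) with metric \((dy_1^2+dy_2^2)/y_2^2\). Second, our Haar measure \eqref{eq:sl2-haar} is half the Riemannian volume, which rescales the kernel by \(2\). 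Third, his fiber coordinate is the polar angle \(\zeta\) of \(g=PR(\zeta)\). Lemma~\ref{lem:polar-iwasawa-fiber} converts \(\zeta\) into the Iwasawa angle \(\theta\) via \(\zeta=\theta-\arctan(y_1/(y_2+1))\). Note also that the fiber \(K\) has period \(2\pi\) in \(\theta\), while \(Z=-\tfrac12\partial_\theta\). This is why the Fourier modes appear as \(\ee^{\ii m\theta}\) with eigenvalue \(-m^2/4\) for \(Z^2\), and it accounts for the factor \(\ee^{-tm^2/4}\).

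For the off-diagonal formula I would work mode by mode.
\begin{enumerate}
\item Use \eqref{eq:Lcal-coordinates}. On functions \(f(z)\ee^{\ii m\theta}\), \(\Lcal\) acts as \(y_2^2\Delta+\ii m y_2\partial_{y_1}-m^2/4\), a weight-\(m\) Maass Laplacian shifted by \(-m^2/4\).
\item Expand \(p_t\) in a Fourier series in \(\theta-\phi\). Each coefficient is then the Maass heat kernel of weight \(m\), multiplied by \(\ee^{-tm^2/4}\), with the \(\ee^{-t/4}\) coming from the bottom of the spectrum.
\item Obtain the weight-\(m\) Maass heat kernel by taking the \(m\)-th fiber Fourier coefficient of Bonnefont's integral representation. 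His representation is an integral over an auxiliary real variable \(\eta\) of a Gaussian in \(A(\rho,\eta)=\arccosh(\cosh\rho\cosh\eta)\), where \(\rho=d/2\) is the half-distance. The fiber shift of \(g\) enters it as a phase, and Fourier-transforming that phase produces the weight \(\ee^{-m\eta}\).
\item Read off the remaining automorphy phase \(\bigl(\ii(\bar u-z)/|\bar u-z|\bigr)^m\). It is the standard Maass point-pair phase and comes from Lemma~\ref{lem:iwasawa-action}: moving \(u\) to \(\ii\) by an element \(A\) introduces \(R(\alpha_A)\) in the fiber, and combining this with the polar/Iwasawa angle correction of Lemma~\ref{lem:polar-iwasawa-fiber} gives exactly \(\Arg(\ii(\bar u-z))\).
\end{enumerate}
I would verify the last step first for \(u=\ii\), where \(\ii(-\ii-z)\) is proportional to \((y_2+1)-\ii y_1\), matching Lemma~\ref{lem:polar-iwasawa-fiber}, and then extend to general \(u\) by equivariance.

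For the diagonal value, left invariance gives independence of \((z,\theta)\), so it suffices to compute \(p_t(e,e)\). The shortest route is Bonnefont's explicit on-diagonal/fiber formula evaluated at the identity. Alternatively, take \(\rho=0\) in his representation and note that points \(R(\zeta)\) on the fiber through \(e\) are reached with \(\zeta\) defined mod \(2\pi\). Summing over the lifts \(\zeta+2k\pi\) from the universal-cover kernel gives the sum over \(k\in\Z\). Each term should then be a Gaussian in \(k\) divided by \(\cosh^2\) of the scaled fiber variable, which is the known closed form of the Heisenberg-type fiber integral at the center of the cover. The constant \(1/(8t^2)\) is fixed by the factor-\(2\) measure normalization together with the \(t^{-2}\) scaling of a homogeneous dimension \(4\) structure.

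The main obstacle is the bookkeeping of normalizations and phases: the scaling of the frame, the factor \(2\) from the measure, the period \(2\pi\) versus \(4\pi\) of the fiber (note \(R(\pi)=-I\)), and the exact phase. I would fix all of these by checking two things: that the small-time asymptotics of the diagonal agree with the nilpotent (Heisenberg) approximation, and that the \(m=0\) mode reproduces McKean's heat kernel on \(\HH\), up to the factor \(1/(2\pi)\) from the fiber.
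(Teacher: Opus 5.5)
Your proposal follows essentially the same route as the paper. Both use Bonnefont's diagonal formula (his Proposition 3.7) and his integral representation (his Proposition 3.3) after renormalization, and both obtain the fiber phase $\arg\bigl(\ii(\bar u-z)\bigr)$ from Lemmas~\ref{lem:iwasawa-action} and~\ref{lem:polar-iwasawa-fiber}. Your mode-by-mode Fourier coefficient of the periodized Gaussian in the fiber variable is exactly the paper's Poisson summation step, followed by $\eta\to-\eta$. The one difference is that the paper fixes the normalization directly as $p_t=\frac14 p^{\mathrm B}_{t/4}$ (Bonnefont's frame is twice yours and $d\mu=4\,d\mu_{\mathrm B}$), whereas you defer it to the Heisenberg and McKean consistency checks; those checks do reproduce the stated constants.
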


\begin{proof}
By left-invariance of $\mathcal L$, the diagonal heat kernel is constant; in particular
\[
p_t((z,\theta),(z,\theta))=p_t((\mathrm i,0),(\mathrm i,0)).
\]
We first record the normalization needed to use Bonnefont's formula.
Bonnefont's Lie-algebra basis is twice the basis used here, so that
\(\Lcal_{\mathrm B}=4\Lcal\).  With the Haar normalization in
\cite{Bonnefont}, a direct comparison in exponential coordinates gives
\(d\mu=4\,d\mu_{\mathrm B}\).  Consequently the two heat kernels satisfy
\begin{equation}\label{eq:bonnefont-normalization}
        p_t(g,h)=\frac14p^{\mathrm B}_{t/4}(g,h).
\end{equation}
After this time and density rescaling, Bonnefont's formula
\cite[Proposition 3.7]{Bonnefont} gives
\begin{align*}
p_t((\mathrm i,0),(\mathrm i,0))
&=\frac{e^{-t/4}}{2t^2}\sum_{k \in \mathbb{Z}} \exp\left(-\frac{4k^2\pi^2}{t} \right)
\frac{\exp \left( -\frac{4|k| \pi^2}{t}\right)}{\left( 1+\exp \left( -\frac{4|k| \pi^2}{t}\right)\right)^2} \\
&=\frac{\ee^{-t/4}}{8t^2}
\sum_{k\in\Z}
\frac{
\exp\!\left(-\frac{4k^2\pi^2}{t}\right)
}{
\cosh^2\!\left(\frac{2|k|\pi^2}{t}\right)
}.
\end{align*}
For the off-diagonal formula, left-invariance and \cite[Proposition 3.3]{Bonnefont} yield
\begin{align*}
    p_t((u,\phi),(z,\theta)) &=p_t((i,0),(u,\phi)^{-1}(z,\theta)) \\
    & =\frac{e^{-t/4}}{(2\pi t)^2} \sum_{k \in \mathbb Z} \int_{-\infty}^{\infty}
\ee^{-A(\rho,\eta)^2/t} \ee^{\frac{(\eta+\ii \zeta-2 \ii k\pi)^2 }{t}}
\frac{A(\rho,\eta)}
{\sqrt{\cosh^2\rho\,\cosh^2\eta-1}}
\,d\eta,
\end{align*}
where Lemmas \ref{lem:iwasawa-action} and \ref{lem:polar-iwasawa-fiber} give
\[
\zeta=\theta-\phi+  \arg (\ii(\bar{u}-z)).
\]
Interchanging the sum and the integral and using Poisson summation in the form
\[
\sum_{k\in\mathbb Z}
\exp\left(\frac{(\eta+i\zeta-2\pi i k)^2}{t}\right)
=
\sqrt{\frac{t}{4\pi}}
\sum_{m\in\mathbb Z}
\exp\left(-\frac{t m^2}{4}+m\eta+im\zeta\right)
\]
gives \eqref{eq:offdiag-heat-kernel-sl2} after the change of variable $\eta \to -\eta$ in the integral.
\end{proof}

We now relate the subelliptic heat kernel to the Maass Laplacians. Such a
connection was first pointed out in \cite{BaudoinDemni}, but non-trivial
changes of variables are required to use those formulas in our setting, so
we proceed with a direct computation.

For $B\in\R$, define the  operator
\begin{equation}\label{eq:maass-operator-sl2}
\Mcal_B
=
-\frac12 y_2^2
\left(
\frac{\partial^2}{\partial y_1^2}
+
\frac{\partial^2}{\partial y_2^2}
\right)
+
\ii B y_2\frac{\partial}{\partial y_1}
+
\frac{B^2}{2}.
\end{equation}
Equivalently,
\[
D_B=-2\Mcal_B+B^2
=
 y_2^2
\left(
\frac{\partial^2}{\partial y_1^2}
+
\frac{\partial^2}{\partial y_2^2}
\right)
-2\ii B y_2\frac{\partial}{\partial y_1}
\]
is the usual Maass Laplacian of weight $2B$, see \cite{IkedaMatsumoto}.

\begin{lemma}\label{lem:Lcal-maass-relation-sl2}
For every $m\in\Z$ and every smooth function $f$ on $\HH$,
\[
        \Lcal(\ee^{-\ii m\theta}f)
        =
        -2\ee^{-\ii m\theta}\Mcal_{m/2}f.
\]
\end{lemma}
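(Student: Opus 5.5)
Substituting \(F=\ee^{-\ii m\theta}f(y_1,y_2)\) directly into the coordinate expression \eqref{eq:Lcal-coordinates} for \(\Lcal\) and then comparing with the definition \eqref{eq:maass-operator-sl2} of \(\Mcal_B\) at \(B=m/2\) should prove the lemma. No global input is needed: the statement is a pointwise identity between differential operators, and \eqref{eq:Lcal-coordinates} already encodes the left-invariant frame in Iwasawa coordinates.

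First I will compute the three groups of terms separately. The horizontal part \(y_2^2(\partial_{y_1}^2+\partial_{y_2}^2)\) commutes with multiplication by \(\ee^{-\ii m\theta}\), so it contributes \(\ee^{-\ii m\theta}y_2^2(\partial_{y_1}^2+\partial_{y_2}^2)f\). In the mixed term, \(\partial_\theta\) acts on \(\ee^{-\ii m\theta}\) and produces \(-\ii m\), giving \(-\ii m\,y_2\,\ee^{-\ii m\theta}\partial_{y_1}f\). The fiber term gives \(\tfrac14(-\ii m)^2=-\tfrac{m^2}{4}\), i.e.\ \(-\tfrac{m^2}{4}\ee^{-\ii m\theta}f\).

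Next I will evaluate \(-2\Mcal_{m/2}f\) from \eqref{eq:maass-operator-sl2}:
\[
-2\Mcal_{m/2}f=y_2^2\left(\frac{\partial^2}{\partial y_1^2}+\frac{\partial^2}{\partial y_2^2}\right)f-\ii m\,y_2\frac{\partial f}{\partial y_1}-\frac{m^2}{4}f .
\]
This agrees term by term with the previous computation, which proves the lemma. The relation \(D_{m/2}=-2\Mcal_{m/2}+m^2/4\) then identifies the \(m\)-th Fourier mode of \(\Lcal\) with the weight-\(m\) Maass Laplacian shifted by \(-m^2/4\).

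The one point that needs care, and the only real obstacle, is that \eqref{eq:Lcal-coordinates} is correct, in particular the sign and coefficient of the mixed term \(y_2\partial^2_{y_1\theta}\). I would check it by squaring the displayed expressions for \(X\) and \(Y\). Summing the principal parts, the cross terms \(\pm y_2^2\sin(2\theta)\cos(2\theta)\partial_{y_1}\partial_{y_2}\) cancel and \(\sin^2+\cos^2=1\) collapses the rest. The first-order terms produced when the \(\tfrac12\sin(2\theta)\partial_\theta\) and \(\tfrac12\cos(2\theta)\partial_\theta\) components differentiate the \(\theta\)-dependent coefficients, and when the \(y_2\)-components differentiate \(y_2\), must cancel. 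If any sign convention for \(\theta\) were reversed, the lemma would instead hold with \(\ee^{+\ii m\theta}\); this is harmless because one can replace \(m\) by \(-m\). Since \eqref{eq:Lcal-coordinates} is stated earlier in the paper, I take it as given.
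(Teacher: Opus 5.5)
Your proposal is correct and is essentially the paper's proof: substitute $\ee^{-\ii m\theta}f$ into \eqref{eq:Lcal-coordinates}, use $\partial_\theta\mapsto-\ii m$ and $\partial_\theta^2\mapsto-m^2$, and match the result term by term with $-2\Mcal_{m/2}f$. Your proposed check of \eqref{eq:Lcal-coordinates} by squaring $X$ and $Y$ does go through (the cross terms and the first-order terms cancel), but it is not needed, since the paper also takes that formula as given.
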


\begin{proof}
Using \eqref{eq:Lcal-coordinates},
\[
\frac{\partial}{\partial\theta}(\ee^{-\ii m\theta}f)=-\ii m\ee^{-\ii m\theta}f,
\qquad
\frac{\partial^2}{\partial\theta^2}(\ee^{-\ii m\theta}f)=-m^2\ee^{-\ii m\theta}f.
\]
Substitution gives
\[
\Lcal(\ee^{-\ii m\theta}f)
=
\ee^{-\ii m\theta}
\left[
 y_2^2\left(\frac{\partial^2 f}{\partial y_1^2}+\frac{\partial^2f }{\partial y_2^2}\right)
-\ii m y_2 \frac{\partial f}{\partial y_1}
-\frac{m^2}{4}f
\right],
\]
which is $-2\ee^{-\ii m\theta}\Mcal_{m/2}f$.
\end{proof}

Let $q_B(\tau;u,z)$ denote the heat kernel of $\ee^{-\tau\Mcal_B}$ on $\HH$ with respect to
\[
        dm_{\HH}(z)=\frac{dy_1\,dy_2}{y_2^2}.
\]
Lemma~\ref{lem:Lcal-maass-relation-sl2} gives the Fourier expansion
\begin{equation}\label{eq:subelliptic-maass-decomposition-sl2}
p_t((u,\phi),(z,\theta))
=
\frac1{2\pi}
\sum_{m\in\Z}
\ee^{\ii m(\theta-\phi)}
q_{m/2}(2t;u,z).
\end{equation}
By uniqueness of Fourier coefficients, Proposition~\ref{prop:heat-kernel-sl2} therefore implies the following explicit formula.

\begin{corollary}\label{cor:maass-heat-kernel-explicit}
For every $m\in\Z$, $t>0$, and $u,z\in\HH$, $u\neq z$,
\begin{align*}
q_{m/2}(2t;u,z)=
\frac{\ee^{-t(m^2+1)/4}}{4\pi^{3/2}t^{3/2}}  \left(
\ii\,\frac{\bar u-z}{|\bar u-z|}
\right)^m
\int_{-\infty}^{\infty}
\ee^{-A(\rho,\eta)^2/t}
\ee^{-m\eta}
\frac{A(\rho,\eta)}
{\sqrt{\cosh^2\rho\,\cosh^2\eta-1}}
\,d\eta.
\end{align*}
\end{corollary}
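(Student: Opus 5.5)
The plan is to compare two Fourier expansions in the fiber variable $\theta-\phi$ and read off the $m$-th coefficient. First we justify \eqref{eq:subelliptic-maass-decomposition-sl2}. Since $\Lcal$ commutes with $\partial_\theta$, it preserves each Fourier mode $\ee^{-\ii m\theta}f(z)$. By Lemma~\ref{lem:Lcal-maass-relation-sl2}, $\Lcal$ acts on that mode as $-2\Mcal_{m/2}$. Hence $\ee^{t\Lcal}$ acts on it as $\ee^{-2t\Mcal_{m/2}}$, whose kernel with respect to $dm_{\HH}$ is $q_{m/2}(2t;u,z)$. The Haar measure \eqref{eq:sl2-haar} is $dm_{\HH}\,d\theta$, and the orthogonal projection onto the $m$-th mode has kernel $\frac1{2\pi}\ee^{\ii m(\theta-\phi)}$ up to the sign convention. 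Summing over $m$ therefore gives \eqref{eq:subelliptic-maass-decomposition-sl2}.

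The sign convention needs care. Acting with the kernel on $\ee^{-\ii m\theta}f(z)$ and integrating in $\theta$ against $\ee^{\ii m'(\theta-\phi)}$ selects $m'=m$ and produces $\ee^{-\ii m\phi}$. This is consistent, because the heat semigroup applied at the point $(u,\phi)$ must return $\ee^{-\ii m\phi}(\ee^{-2t\Mcal_{m/2}}f)(u)$. I will verify this with one line of bookkeeping.

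Second, for fixed $u\neq z$ and fixed $t$, both sides are functions of $\theta-\phi$ alone. Formula \eqref{eq:offdiag-heat-kernel-sl2} from Proposition~\ref{prop:heat-kernel-sl2} is such a Fourier series, with $m$-th coefficient
\[
\frac{\ee^{-t/4}}{8\pi^{5/2}t^{3/2}}\ee^{-tm^2/4}\Bigl(\ii\frac{\bar u-z}{|\bar u-z|}\Bigr)^m\int_{-\infty}^{\infty}\cdots\,d\eta .
\]
Equating this with $\frac1{2\pi}q_{m/2}(2t;u,z)$ and multiplying by $2\pi$ gives the prefactor
\[
\frac{2\pi\,\ee^{-t(m^2+1)/4}}{8\pi^{5/2}t^{3/2}}=\frac{\ee^{-t(m^2+1)/4}}{4\pi^{3/2}t^{3/2}},
\]
which is exactly the claimed formula.

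The main obstacle is justifying uniqueness of the Fourier coefficients. This requires both series to converge in $L^2(d\theta)$, or better absolutely, for fixed $u\neq z$.

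For \eqref{eq:offdiag-heat-kernel-sl2}, I would bound the $\eta$-integral. We have $A(\rho,\eta)\ge|\eta|$, so $\ee^{-A^2/t}\ee^{-m\eta}\le\ee^{-\eta^2/t+|m||\eta|}$, which integrates to $O(\ee^{tm^2/4})$. That growth is cancelled by the prefactor $\ee^{-tm^2/4}$ only marginally, so absolute convergence is not immediate. To get a genuine gain I would instead use $\cosh A=\cosh\rho\cosh\eta$, which gives $A\ge|\eta|+c(\rho)$ with $c(\rho)>0$ for $\rho>0$. Alternatively, I would use the fact that the left side $p_t$ is smooth in $\theta$ off the diagonal, since Hörmander hypoellipticity makes the heat kernel smooth. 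Smoothness makes its Fourier coefficients rapidly decreasing and the $L^2$ expansion legitimate. The Poisson summation step in the proof of Proposition~\ref{prop:heat-kernel-sl2} already identifies \eqref{eq:offdiag-heat-kernel-sl2} as the Fourier series of the smooth function $p_t$, so comparing $L^2(d\theta)$ coefficients is legitimate.
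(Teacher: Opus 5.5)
Your proposal is correct and follows the paper's argument: you obtain \eqref{eq:subelliptic-maass-decomposition-sl2} from Lemma~\ref{lem:Lcal-maass-relation-sl2}, then match the $m$-th fiber Fourier coefficient with that of \eqref{eq:offdiag-heat-kernel-sl2} by uniqueness of Fourier coefficients, and the constant $2\pi/(8\pi^{5/2})=1/(4\pi^{3/2})$ is right. Your convergence discussion goes beyond the paper, which simply invokes uniqueness, and the bound is easier than you suggest: since $A\le|\eta|+\rho$ and $\sqrt{\cosh^2\rho\cosh^2\eta-1}\ge\sinh\rho\cosh\eta$, the weight $A/\sqrt{\cosh^2\rho\cosh^2\eta-1}$ is $O_\rho\bigl((1+|\eta|)\ee^{-|\eta|}\bigr)$, which already makes the $m$-th term $O\bigl((1+|m|)\ee^{-t|m|/2}\bigr)$ for fixed $t$ and $\rho$, so the $c(\rho)$ refinement is not needed.
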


\subsection{Twisted compact quotients and the sub-Riemannian Selberg trace formula}
\label{sec:sl2-quotients}

Let $\Gamma<G$ be a cocompact discrete subgroup whose projection
\[
        \bar\Gamma\subset\PSL(2,\R)
\]
is torsion-free and contains no elliptic elements.  We assume throughout that
\[
        \Gamma\cap K=\{I\}.
\]
Thus $M=\bar\Gamma\backslash\HH$ is a compact hyperbolic surface, and the projection $\Gamma\to\bar\Gamma$ is an isomorphism.  We shall therefore identify $\Gamma$ and $\bar\Gamma$ whenever no confusion is possible.
In formulas involving the fiber holonomy, however, we retain the unique
lift in \(\Gamma\), because its trace can be positive or negative and this
central sign is detected by the odd fiber Fourier modes.

Given a character $\chi:\Gamma\to\R/2\pi\Z$, define the twisted left action of $\Gamma$ on $G$ by
\begin{equation}\label{eq:twisted-action}
        \gamma\cdot_\chi A=\gamma A R(\chi(\gamma)).
\end{equation}
By Lemma~\ref{lem:iwasawa-action},
\begin{equation}\label{eq:twisted-action-iwasawa}
        \gamma\cdot_\chi s(z)R(\theta)
        =s(\gamma z)R\bigl(\theta+\alpha_\gamma(z)+\chi(\gamma)\bigr).
\end{equation}
We write
\[
        P^{\Gamma,\chi}:=\Gamma\backslash_\chi G.
\]
With the Haar measure \eqref{eq:sl2-haar},
\[
        \Vol(P^{\Gamma,\chi})=2\pi\Vol(M).
\]
If $\Fcal\subset\HH$ is a fundamental domain for the action of $\bar\Gamma$ on $\HH$, then
\[
        \mathscr F
        =
        \{s(z)R(\theta):z\in\Fcal,\ 0\leq\theta<2\pi\}
\]
is a fundamental domain for the action of $\Gamma$ on $G$ defined by \eqref{eq:twisted-action}.

Since $\Lcal$ commutes with left translations and is invariant under the right $K$-action, it descends to an operator $\Lcal_{\Gamma,\chi}$ on $P^{\Gamma,\chi}$.  The operator $-\Lcal_{\Gamma,\chi}$ is non-negative, essentially self-adjoint, subelliptic, and has discrete spectrum.

The heat kernel on $P^{\Gamma,\chi}$ is obtained by periodization:
\begin{equation}\label{eq:quotient-heat-kernel-sl2}
p_t^{\Gamma,\chi}(g,h)
=
\sum_{\gamma\in\Gamma}p_t(g,\gamma\cdot_\chi h).
\end{equation}
Consequently
\begin{equation}\label{eq:trace-periodized-sl2}
\Tr(\ee^{t\Lcal_{\Gamma,\chi}})
=
\int_{\mathscr F}p_t^{\Gamma,\chi}(g,g)\,dg
=
\sum_{\gamma\in\Gamma}
\int_{\mathscr F}p_t(g,\gamma\cdot_\chi g)\,dg.
\end{equation}

Let $\mathcal P_{\bar\Gamma}$ denote the set of primitive hyperbolic conjugacy classes in $\bar\Gamma$.
We count $[\gamma]$ and $[\gamma^{-1}]$ separately throughout the paper.
These classes are distinct: an orientation-preserving isometry of \(\HH\)
conjugating $\gamma$ to $\gamma^{-1}$ would reverse its axis and have
order two, contrary to the torsion-free hypothesis.
If $[\gamma]\in\mathcal P_{\bar\Gamma}$, let $\ell(\gamma)>0$ be its translation length on $\HH$.  Equivalently, if $\gamma\in\bar\Gamma$ is conjugate in $\PSL(2,\R)$ to
\[
        \begin{pmatrix} a&0\\0&a^{-1}\end{pmatrix},
        \qquad a>1,
\]
then $\ell(\gamma)=2\log a$.

For every non-identity hyperbolic element $\delta\in\Gamma$, define its
central phase by
\begin{equation}\label{eq:central-phase}
\kappa_\Gamma(\delta)
=
\begin{cases}
0,&\Tr(\delta)>0,\\
\pi,&\Tr(\delta)<0.
\end{cases}
\end{equation}
Thus, if \(\ell(\delta)\) denotes the translation length of its projection,
then \(\delta\) is conjugate in \(\SL(2,\R)\) to
\[
\ee^{\ii\kappa_\Gamma(\delta)}
\begin{pmatrix}
\ee^{\ell(\delta)/2}&0\\
0&\ee^{-\ell(\delta)/2}
\end{pmatrix}.
\]
Although \(\kappa_\Gamma\) need not be a group character, it is constant on
conjugacy classes and satisfies
\begin{equation}\label{eq:central-phase-powers}
        \kappa_\Gamma(\delta^r)
        =r\kappa_\Gamma(\delta)
        \quad\text{in }\R/2\pi\Z,
        \qquad r\geq1.
\end{equation}
The total fiber phase is
\begin{equation}\label{eq:total-fiber-phase}
        \phi_{\Gamma,\chi}(\delta)
        :=\chi(\delta)+\kappa_\Gamma(\delta)
        \quad\text{in }\R/2\pi\Z.
\end{equation}
Define the corresponding theta factor by
\begin{equation}\label{eq:theta-character-sl2}
\Theta_{\Gamma,\chi}(t;\delta)
=
\sum_{m\in\Z}
\ee^{-m^2t/4}\ee^{-\ii m\phi_{\Gamma,\chi}(\delta)}.
\end{equation}
Equivalently,
\begin{equation}\label{eq:theta-character-cosine-sl2}
\Theta_{\Gamma,\chi}(t;\delta)
=
1+2\sum_{m=1}^{\infty}
\ee^{-m^2t/4}\cos\bigl(m\phi_{\Gamma,\chi}(\delta)\bigr).
\end{equation}
Since $\chi$ is a character and \eqref{eq:central-phase-powers} holds,
$\Theta_{\Gamma,\chi}(t;\delta)$ is constant on conjugacy classes and
\[
        \Theta_{\Gamma,\chi}(t;\delta^r)
        =
        \sum_{m\in\Z}
        \ee^{-m^2t/4}\ee^{-\ii mr\phi_{\Gamma,\chi}(\delta)}.
\]

\begin{theorem}
\label{thm:twisted-sr-selberg-sl2}
Let $\chi:\Gamma\to\R/2\pi\Z$ be a character, and let
\[
        0=\lambda_0^\chi\leq\lambda_1^\chi\leq\lambda_2^\chi\leq\cdots
\]
be the eigenvalues of $-\Lcal_{\Gamma,\chi}$, repeated with multiplicity.  Then, for every $t>0$,
\begin{equation}\label{eq:main-trace-formula-character-sl2}
\sum_{j=0}^{\infty}\ee^{-t\lambda_j^\chi}
=
\mathcal I_\chi(t)+\mathcal H_\chi(t),
\end{equation}
where the identity contribution is independent of $\chi$ and is given by
\begin{equation}\label{eq:identity-contribution-character-sl2}
\mathcal I_\chi(t)
=
\frac{\pi\Vol(M)\ee^{-t/4}}{4t^2}
\sum_{k\in\Z}
\frac{
\exp\!\left(-\frac{4k^2\pi^2}{t}\right)
}{
\cosh^2\!\left(\frac{2|k|\pi^2}{t}\right)
},
\end{equation}
and the hyperbolic contribution is
\begin{equation}\label{eq:hyperbolic-contribution-character-sl2}
\mathcal H_\chi(t)
=
\frac{\ee^{-t/4}}{4\sqrt{\pi t}}
\sum_{[\gamma]\in\mathcal P_{\bar\Gamma}}
\sum_{r=1}^{\infty}
\Theta_{\Gamma,\chi}(t;\gamma^r)
\frac{\ell(\gamma)}
{\sinh\!\left(\frac{r\ell(\gamma)}2\right)}
\exp\!\left(-\frac{r^2\ell(\gamma)^2}{4t}\right).
\end{equation}
In $\Theta_{\Gamma,\chi}(t;\gamma^r)$ we use the unique lift to
$\Gamma$ of a representative of the primitive class $[\gamma]$; the
value is independent of the representative.
\end{theorem}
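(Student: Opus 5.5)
We start from the periodized trace formula \eqref{eq:trace-periodized-sl2} and split the sum over $\gamma\in\Gamma$ into the identity term and the non-identity terms. Since $\Gamma\cap K=\{I\}$ and $\bar\Gamma$ is torsion-free and cocompact, every $\gamma\neq I$ projects to a hyperbolic element of $\bar\Gamma$.

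\emph{Identity term.} For $\gamma=I$ we have $\gamma\cdot_\chi g=g$ (recall $\chi(I)=0$). The integrand is then the constant diagonal value \eqref{eq:diagonal-heat-kernel-sl2}. Integrating over $\mathscr F$, whose volume is $2\pi\Vol(M)$ for the measure \eqref{eq:sl2-haar}, multiplies $\ee^{-t/4}/(8t^2)$ by $2\pi\Vol(M)$. This gives exactly $\mathcal I_\chi(t)$, which is visibly independent of $\chi$.

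\emph{Hyperbolic terms.} For $\gamma\neq I$ we write $g=s(z)R(\theta)$ and use \eqref{eq:twisted-action-iwasawa} together with the Fourier expansion \eqref{eq:subelliptic-maass-decomposition-sl2}. The phase produced is $\ee^{\ii m(\alpha_\gamma(z)+\chi(\gamma))}$, and the $\theta$-integral over $[0,2\pi)$ gives a factor $2\pi$ that cancels the $1/(2\pi)$. Hence
\[
\int_{\mathscr F}p_t(g,\gamma\cdot_\chi g)\,dg=\sum_{m\in\Z}\ee^{\ii m\chi(\gamma)}\int_{\Fcal}\ee^{\ii m\alpha_\gamma(z)}q_{m/2}(2t;z,\gamma z)\,dm_{\HH}(z).
\]
The summand $\ee^{\ii m\alpha_\gamma(z)}q_{m/2}(2t;z,\gamma z)$ is exactly the $\gamma$-term of the periodized weight-$m$ Maass heat kernel with automorphy factor $((cz+d)/|cz+d|)^m$. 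We then apply the standard unfolding. We group $\gamma$ by $\Gamma$-conjugacy classes $[\gamma_0^r]$ with $\gamma_0$ primitive, and use that the centralizer is $\langle\gamma_0\rangle$, with the lift fixed by $\Gamma\cong\bar\Gamma$. The invariance needed for unfolding follows from the cocycle property of $\alpha$ (Lemma \ref{lem:iwasawa-action}) and the character property of $\chi$. After unfolding, each class contributes an integral over $\langle\gamma_0\rangle\backslash\HH$. Conjugating $\gamma$ in $\SL(2,\R)$ to $\ee^{\ii\kappa_\Gamma(\gamma)}\mathrm{diag}(\ee^{\ell/2},\ee^{-\ell/2})$ with $\ell=r\ell(\gamma_0)$, the central sign contributes the constant $\ee^{\ii m\kappa_\Gamma(\gamma)}$ by \eqref{eq:alpha-central-sign}. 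What remains is the orbital integral for the diagonal element over the strip $\{1\le |z|<\ee^{\ell(\gamma_0)}\}$.

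\emph{Orbital integral.} This orbital integral is the hyperbolic term of the Ikeda--Matsumoto Maass heat trace formula at time $2t$ and weight $m$. In our normalization it should equal
\[
\frac{\ee^{-t(m^2+1)/4}}{4\sqrt{\pi t}}\,\frac{\ell(\gamma_0)}{\sinh(r\ell(\gamma_0)/2)}\,\ee^{-r^2\ell(\gamma_0)^2/(4t)},
\]
and in particular it does not depend on $m$ apart from the factor $\ee^{-tm^2/4}$. Alternatively, this can be verified directly from Corollary \ref{cor:maass-heat-kernel-explicit} by integrating over the strip in the coordinates $z=\ee^{s}\ee^{\ii\varphi}$. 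Summing over $m$ then produces $\sum_m \ee^{-m^2t/4}\ee^{\ii m\phi_{\Gamma,\chi}(\gamma)}$. Replacing $m\to -m$ gives the theta factor $\Theta_{\Gamma,\chi}(t;\gamma_0^r)$, with $\phi(\gamma_0^r)=r\phi(\gamma_0)$ by \eqref{eq:central-phase-powers}. Summing over $r$ and over primitive classes, with $[\gamma]$ and $[\gamma^{-1}]$ counted separately, yields $\mathcal H_\chi(t)$.

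\emph{Main obstacle.} The hardest step is this orbital-integral computation, including the exact sign and phase bookkeeping. One must check that the weight-$m$ phase $\ee^{\ii m\alpha_\gamma}$ combines with the factor $(\ii(\bar u-z)/|\bar u-z|)^m$ in Corollary \ref{cor:maass-heat-kernel-explicit}. For positive-trace diagonal elements this combination should become trivial after integrating over $\varphi$. One must also verify that the Ikeda--Matsumoto formula, in which the weight enters only through $\ee^{-tm^2/4}$ after removing the shift $B^2/2$, matches the constants. A secondary issue is justifying the interchange of the sums over $m$ and $\gamma$ with the integral. This follows from Gaussian bounds on $q_{m/2}$ and the exponential growth bound on the number of closed geodesics, since the terms decay like $\ee^{-tm^2/4}$ and $\ee^{-r^2\ell^2/4t}$.
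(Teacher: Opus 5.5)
Your route is correct in outline. It differs from the paper's in where the fiber Fourier decomposition is performed.

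\emph{The paper} works on the spectral side. It splits $L^2(P^{\Gamma,\chi})$ into the fiber modes $\mathscr E_m$ and realizes each mode as weight-$m$ automorphic sections with a unitary multiplier $\rho_{m,\chi}$ on $\widehat\Gamma=\Gamma\cup(-\Gamma)$; this is where $\kappa_\Gamma$ enters. It then applies the full Ikeda--Matsumoto trace formula mode by mode and sums over $m$, recovering the identity term from the diagonal Fourier expansion of $p_t(e,e)$.

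\emph{You} work on the geometric side, one group element at a time. The identity term comes immediately from the constant diagonal value and $\Vol(\mathscr F)=2\pi\Vol(M)$. The central sign is handled at group level through $-Dg=DgR(\pi)$, so no multiplier system on $\widehat\Gamma$ is needed. Only the hyperbolic orbital integrals of the weight-$m$ kernels enter. The external inputs (Bonnefont's diagonal value and the Ikeda--Matsumoto orbital integrals) are the same in both routes.

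What the paper's ordering buys is the analysis, and this is where your justification of exchanging $\sum_m$ with $\sum_\gamma\int$ does not work as stated. The weight-$m$ kernels do \emph{not} decay like $\ee^{-tm^2/4}$ pointwise. A Laplace-method estimate of the $\eta$-integral in Corollary~\ref{cor:maass-heat-kernel-explicit} gives $|q_{m/2}(2t;z,w)|$ of order $|m|\,\ee^{-t|m|/2}(\cosh\rho)^{-|m|}$ as $|m|\to\infty$. This is the holomorphic discrete-series part, with bottom eigenvalue $|m|/2$ of $2\Mcal_{m/2}$, and its hyperbolic orbital integrals vanish. The factor $\ee^{-tm^2/4}$ appears only after the oscillating phase $\bigl(\ii(\bar z-\gamma z)/|\bar z-\gamma z|\bigr)^m$ is integrated along the orbit. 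For the same reason, in a direct computation the phase does not ``become trivial''; the reduction to weight $0$ is a genuine cancellation. So the post-integration Gaussian decay cannot serve as the dominating bound for the exchange, and Gaussian bounds in the distance, uniform in $m$, give nothing summable over $m$.

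The exchange is nevertheless true, and you can justify it in either of two ways.
\begin{itemize}
\item Split the $m$-range in the Corollary at $|m|\approx 2\log\cosh\rho/t$. This gives a Gaussian bound in $\rho=d(z,w)/2$ for $\sum_m|q_{m/2}(2t;z,w)|$, valid for $\rho$ bounded below, which is summable over $\gamma$ and integrable over orbits.
\item Use positivity of $p_t$ to group into classes and unfold by Tonelli. For each class, set $f(\beta)=\int_{\langle\gamma_0\rangle\backslash G}p_t(g,\gamma gR(\beta))\,dg$ and check that it is continuous on $\R/2\pi\Z$. By Fubini its Fourier coefficients are exactly the weight-$m$ orbital integrals, and their Gaussian decay gives $f(\chi(\gamma))$ as an absolutely convergent Fourier series.
\end{itemize}

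The paper avoids all of this. Monotone convergence handles the mode traces, and the bound $|\mathcal H_{m,\chi}(t)|\le \ee^{-m^2t/4}\mathcal H_M(t)$ is applied only to quantities already integrated by the per-mode trace formula.
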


\begin{proof}
Fix \(t>0\).  The heat operator \(\ee^{t\Lcal_{\Gamma,\chi}}\) is
positive and trace class, since \(-\Lcal_{\Gamma,\chi}\) is a
non-negative subelliptic operator on the compact manifold
\(P^{\Gamma,\chi}\).

The right \(K\)-action descends to \(P^{\Gamma,\chi}\), preserves its
measure, and commutes with \(\Lcal_{\Gamma,\chi}\).  For \(m\in\Z\), let
\[
\mathscr E_m
:=
\left\{
F\in L^2(P^{\Gamma,\chi}):
F(gR(\beta))=\ee^{\ii m\beta}F(g)
\text{ for every }\beta\in\R/2\pi\Z
\right\}.
\]
Then
\[
        L^2(P^{\Gamma,\chi})
        =\widehat{\bigoplus}_{m\in\Z}\mathscr E_m.
\]
Write
\[
        T_{m,\chi}(t)
        :=\Tr_{\mathscr E_m}(\ee^{t\Lcal_{\Gamma,\chi}})
        \geq0,
\]
and let \(\Pi_N\) be the orthogonal projection onto
\(\bigoplus_{|m|\leq N}\mathscr E_m\).  Since \(\Pi_N\) commutes with
the heat operator and increases strongly to the identity,
\begin{equation}\label{eq:finite-fiber-mode-trace}
\Tr(\Pi_N\ee^{t\Lcal_{\Gamma,\chi}})
=
\sum_{|m|\leq N}T_{m,\chi}(t)
\ \uparrow\
\Tr(\ee^{t\Lcal_{\Gamma,\chi}}).
\end{equation}
We compute each fixed-mode trace and then pass to this limit.

A smooth function in \(\mathscr E_m\), lifted to \(G\), has the form
\[
        F(s(z)R(\theta))=\ee^{\ii m\theta}f(z).
\]
Invariance under the twisted action is equivalent to
\begin{equation}\label{eq:fixed-mode-automorphy}
        f(\delta z)
        =
        \ee^{-\ii m(\alpha_\delta(z)+\chi(\delta))}f(z),
        \qquad \delta\in\Gamma.
\end{equation}
Put \(B_m=-m/2\).  Lemma~\ref{lem:Lcal-maass-relation-sl2}, with
\(m\) replaced by \(-m\), gives
\[
        -\Lcal(\ee^{\ii m\theta}f)
        =
        2\ee^{\ii m\theta}\Mcal_{B_m}f.
\]
The map \(f\mapsto(2\pi)^{-1/2}\ee^{\ii m\theta}f\) is unitary from
the corresponding space of automorphic sections on \(M\) onto
\(\mathscr E_m\).  Thus \(T_{m,\chi}(t)\) is the trace of
\(\ee^{-2t\Mcal_{B_m}}\) with automorphy law
\eqref{eq:fixed-mode-automorphy}.

To specify its multiplier, let
\[
        \widehat\Gamma:=\Gamma\cup(-\Gamma),
\]
the full inverse image of \(\bar\Gamma\) in \(\SL(2,\R)\).
Since \(-I\notin\Gamma\), each element of \(\widehat\Gamma\) is uniquely
of the form \((-I)^\varepsilon\delta\), where
\(\varepsilon\in\{0,1\}\) and \(\delta\in\Gamma\).  Define the unitary
character
\[
        \rho_{m,\chi}((-I)^\varepsilon\delta)
        :=(-1)^{m\varepsilon}\ee^{-\ii m\chi(\delta)}.
\]
Then \(\rho_{m,\chi}(-I)=(-1)^m\), and
\eqref{eq:fixed-mode-automorphy} is equivalent to
\[
        f(Az)=\rho_{m,\chi}(A)\ee^{-\ii m\alpha_A(z)}f(z),
        \qquad A\in\widehat\Gamma.
\]
Indeed, the two factors contributed by \(-I\) cancel.
This is the central compatibility condition for the fixed-weight
trace formula; see also \cite[Sections 4.5--4.6]{ChoiTakhtajan}.

Let \(\widetilde\gamma\in\Gamma\) lift a primitive representative
\(\gamma\in\bar\Gamma\), and let \(\gamma_+\in\widehat\Gamma\) be the
positive-trace lift of that same element.  Since
\[
        \widetilde\gamma
        =\ee^{\ii\kappa_\Gamma(\widetilde\gamma)}\gamma_+,
\]
the multiplier on its \(r\)-th positive-trace power is
\begin{equation}\label{eq:maass-induced-multiplier}
\begin{aligned}
        v_{m,\Gamma,\chi}(\gamma^r)
        &:=
        \rho_{m,\chi}(\gamma_+^r)\\
        &=
        \ee^{-\ii mr(\chi(\widetilde\gamma)
        +\kappa_\Gamma(\widetilde\gamma))}
        =
        \ee^{-\ii mr\phi_{\Gamma,\chi}(\widetilde\gamma)}.
\end{aligned}
\end{equation}
Here the sign of \(\kappa_\Gamma\) in the exponential is immaterial
because \(\kappa_\Gamma\in\{0,\pi\}\).

Apply the Maass heat trace formula
\cite[Theorem 6.1]{IkedaMatsumoto} with this multiplier, heat-time
\(\tau=2t\), and \(B=B_m=-m/2\).  Since the effective group
\(\bar\Gamma\) is cocompact and torsion-free, it has no parabolic or
non-identity elliptic contributions.  The formula gives
\begin{equation}\label{eq:fixed-mode-heat-trace}
        T_{m,\chi}(t)=\mathcal I_m(t)+\mathcal H_{m,\chi}(t),
\end{equation}
where
\[
        \mathcal I_m(t)
        =
        \Vol(M)\,q_{-m/2}(2t;\ii,\ii)
\]
and
\begin{align}
\mathcal H_{m,\chi}(t)
&=
\frac{\ee^{-t/4}}{4\sqrt{\pi t}}\,\ee^{-m^2t/4}
\sum_{[\gamma]\in\mathcal P_{\bar\Gamma}}
\sum_{r=1}^{\infty}                                      \notag\\
&\quad\times
\ee^{-\ii mr\phi_{\Gamma,\chi}(\widetilde\gamma)}
\frac{\ell(\gamma)}
{\sinh\!\left(\frac{r\ell(\gamma)}2\right)}
\exp\!\left(-\frac{r^2\ell(\gamma)^2}{4t}\right).
\label{eq:fixed-mode-hyperbolic-term}
\end{align}
In particular, the time and weight factors are
\[
        \frac{\ee^{-\tau/8}}{2\sqrt{2\pi\tau}}\,
        \ee^{-B_m^2\tau/2}
        =
        \frac{\ee^{-t/4}}{4\sqrt{\pi t}}\,\ee^{-m^2t/4}.
\]
All applications of the trace formula up to this point concern a
single fixed mode.

We now sum \eqref{eq:fixed-mode-heat-trace} over \(|m|\leq N\).
The diagonal values \(q_{-m/2}(2t;\ii,\ii)\) are non-negative by the
semigroup property and self-adjointness.  The diagonal Fourier expansion
\eqref{eq:subelliptic-maass-decomposition-sl2} therefore gives
\[
\begin{aligned}
        \lim_{N\to\infty}\sum_{|m|\leq N}\mathcal I_m(t)
        &=
        2\pi\Vol(M)\,p_t(e,e)\\
        &=
        \frac{\pi\Vol(M)\ee^{-t/4}}{4t^2}
        \sum_{k\in\Z}
        \frac{\exp\!\left(-\frac{4k^2\pi^2}{t}\right)}
        {\cosh^2\!\left(\frac{2|k|\pi^2}{t}\right)},
\end{aligned}
\]
where the last equality is Proposition~\ref{prop:heat-kernel-sl2}.
This is \(\mathcal I_\chi(t)\).

Let \(\mathcal H_M(t)\) denote the positive double series
\eqref{eq:fixed-mode-hyperbolic-term} with \(m=0\); it is independent
of \(\chi\).  The positive systole and the at most exponential growth
of the number of primitive classes imply that
\(\mathcal H_M(t)<\infty\), since its summands have Gaussian decay in
\(r\ell(\gamma)\).  As the multipliers have modulus one,
\[
        |\mathcal H_{m,\chi}(t)|
        \leq \ee^{-m^2t/4}\mathcal H_M(t).
\]
More precisely, the sum of the absolute values of all summands in
\eqref{eq:fixed-mode-hyperbolic-term}, also summed over \(m\in\Z\),
is
\[
        \left(\sum_{m\in\Z}\ee^{-m^2t/4}\right)\mathcal H_M(t)
        <\infty.
\]
Consequently the hyperbolic terms converge absolutely as
\(N\to\infty\), and they may be regrouped by primitive classes and
positive powers.  Their sum is
\[
\begin{aligned}
\sum_{m\in\Z}\mathcal H_{m,\chi}(t)
&=
\frac{\ee^{-t/4}}{4\sqrt{\pi t}}
\sum_{[\gamma]\in\mathcal P_{\bar\Gamma}}
\sum_{r=1}^{\infty}\\
&\quad\times
\left(
\sum_{m\in\Z}
\ee^{-m^2t/4}\ee^{-\ii mr\phi_{\Gamma,\chi}(\widetilde\gamma)}
\right)
\frac{\ell(\gamma)}
{\sinh\!\left(\frac{r\ell(\gamma)}2\right)}
\exp\!\left(-\frac{r^2\ell(\gamma)^2}{4t}\right)\\
&=\mathcal H_\chi(t).
\end{aligned}
\]
Taking the limit in the finite sum of
\eqref{eq:fixed-mode-heat-trace} and using
\eqref{eq:finite-fiber-mode-trace} proves
\[
        \Tr(\ee^{t\Lcal_{\Gamma,\chi}})
        =\mathcal I_\chi(t)+\mathcal H_\chi(t).
\]
The spectral theorem gives the asserted sum over eigenvalues.
Finally, \(\chi\) and \(\kappa_\Gamma\) are invariant under conjugation,
and \eqref{eq:central-phase-powers} gives their required power law.
Thus the theta factor is independent of the primitive representative.
\end{proof}

\begin{remark}
\label{rem:sr-trace-vs-surface-heat-trace}
Let
\[
        Z_{\mathrm{sR},\chi}(t)=\Tr(\ee^{t\Lcal_{\Gamma,\chi}}),
        \qquad
        Z_M(t)=\Tr(\ee^{-t\Delta_M}),
\]
where $\Delta_M$ is the positive Laplace--Beltrami operator on $M$.  The classical Selberg heat trace formula  \cite{McKean1972,McKean1974} has the form
\[
        Z_M(t)=\mathcal I_M(t)+\mathcal H_M(t),
\]
where
\begin{equation}\label{eq:base-identity-term}
\mathcal I_M(t)
=
\frac{\Vol(M)\ee^{-t/4}}{4\pi}
\int_{-\infty}^{\infty}
\ee^{-t\lambda^2}\lambda\tanh(\pi\lambda)\,d\lambda
\end{equation}
and
\[
\mathcal H_M(t)
=
\frac{\ee^{-t/4}}{4\sqrt{\pi t}}
\sum_{[\gamma]\in\mathcal P_{\bar\Gamma}}
\sum_{r=1}^{\infty}
\frac{\ell(\gamma)}
{\sinh\!\left(\frac{r\ell(\gamma)}2\right)}
\exp\!\left(-\frac{r^2\ell(\gamma)^2}{4t}\right).
\]
Set
\[
        \vartheta(t):=\sum_{m\in\Z}\ee^{-m^2t/4}
\]
and introduce the closed-geodesic correction
\begin{equation}\label{eq:geodesic-correction-heat}
        \mathcal G_{\Gamma,\chi}(t)
        :=\mathcal H_\chi(t)-\vartheta(t)\mathcal H_M(t).
\end{equation}
The second term is the formal zero-holonomy reference contribution.  The
actual lift contributes the phase \(\kappa_\Gamma\), even when \(\chi=0\).
Consequently the correct comparison is
\begin{equation}\label{eq:untwisted-trace-comparison}
        Z_{\mathrm{sR},\chi}(t)
        =
        \mathcal I_0(t)
        +\vartheta(t)\bigl(Z_M(t)-\mathcal I_M(t)\bigr)
        +\mathcal G_{\Gamma,\chi}(t).
\end{equation}
This comparison separates the determinant into a universal base-surface
factor and a lift- and character-dependent closed-geodesic factor.  In
particular, one cannot in general replace
\(\mathcal H_0(t)\) by \(\vartheta(t)\mathcal H_M(t)\).
\end{remark}
\section{The zeta determinant of the sub-Laplacian}
\label{sec:sub-laplacian-determinant}

We now use the trace formula to define and compute the zeta-regularized
determinant of the positive subelliptic operator
\(-\Lcal_{\Gamma,\chi}\).  We denote its heat trace by
\[
        \mathscr T_\chi(t)
        :=
        \Tr(\ee^{t\Lcal_{\Gamma,\chi}}).
\]
By Theorem~\ref{thm:twisted-sr-selberg-sl2},
\begin{equation}\label{eq:det-heat-trace-decomposition}
        \mathscr T_\chi(t)=\mathcal I_\chi(t)+\mathcal H_\chi(t),
        \qquad t>0.
\end{equation}

\subsection{The spectral zeta function}
\label{subsec:spectral-zeta-sub-laplacian}

\begin{lemma}\label{lem:kernel-sub-laplacian}
Let $\chi:\Gamma\to \R/2\pi\Z$ be a character.  Then
\[
        \ker(-\Lcal_{\Gamma,\chi})
        =
        \{\text{constant functions}\}.
\]
In particular, the eigenvalue \(0\) of \(-\Lcal_{\Gamma,\chi}\) is simple.
\end{lemma}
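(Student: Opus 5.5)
The plan is the standard energy argument for sums of squares of vector fields, combined with bracket generation. Let $F\in\ker(-\Lcal_{\Gamma,\chi})$. By hypoellipticity of $\Lcal$ (H\"ormander's condition holds, since $[X,Y]=Z$ by \eqref{eq:lie-relations}), $F$ is smooth on the compact manifold $P^{\Gamma,\chi}$. Lift $F$ to a smooth function on $G$ invariant under the twisted action \eqref{eq:twisted-action}.

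The left-invariant vector fields $X,Y$ commute with left translations. They do not in general commute with the right rotation $A\mapsto AR(\chi(\gamma))$. However, the horizontal distribution is $\operatorname{Ad}(K)$-invariant: $\operatorname{Ad}(R(\beta))$ acts on $\operatorname{span}\{X,Y\}$ as a rotation, because $Z$ generates $K$ and $[Z,X],[Z,Y]$ lie in $\operatorname{span}\{X,Y\}$. Hence the horizontal gradient norm $|\nabla_{\Hor}F|^2=(XF)^2+(YF)^2$ is a well-defined function on the quotient. The Haar measure \eqref{eq:sl2-haar} is bi-invariant because $G$ is unimodular, so $X$ and $Y$ are divergence-free and $X^*=-X$, $Y^*=-Y$ on $L^2(P^{\Gamma,\chi})$.

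Integrating by parts over the fundamental domain $\mathscr F$ (equivalently, on the closed manifold) gives
\[
0=\int_{P^{\Gamma,\chi}}\bar F\,(-\Lcal_{\Gamma,\chi}F)\,d\mu=\int_{P^{\Gamma,\chi}}\bigl(|XF|^2+|YF|^2\bigr)\,d\mu .
\]
It follows that $XF=YF=0$ everywhere, and therefore $ZF=XYF-YXF=0$. Thus $dF$ vanishes on a full frame of $TG$. Since $P^{\Gamma,\chi}$ is connected, being a quotient of the connected group $G$, $F$ is constant.

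Conversely, constants are invariant under any twisted action, so they belong to the kernel for every $\chi$. This gives simplicity of the eigenvalue $0$.

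The only step needing care is that $|XF|^2+|YF|^2$, or at least the Dirichlet form, descends despite the right $K$-twist. Rather than checking the $\operatorname{Ad}(K)$-invariance by hand, I would argue directly that $\int_{\mathscr F}\bar F\Lcal F\,d\mu=-\int_{\mathscr F}(|XF|^2+|YF|^2)\,d\mu$. The integrand on the left is $\Gamma$-invariant, so it descends. The boundary terms cancel because $\Lcal$ is self-adjoint on the quotient, which the paper already asserts.

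Alternatively, one can avoid the issue entirely via the Fourier decomposition $\mathscr E_m$. On $\mathscr E_m$, the kernel corresponds to $\Mcal_{B_m}f=0$ with $2\Mcal_{B}=-D_B+B^2$. The known positivity $-D_B\ge |B|(1-|B|)$ gives only a weak bound, so the direct energy argument above is preferable and is the one I would use.
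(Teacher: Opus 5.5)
Your proposal is correct and follows essentially the same route as the paper. The $\operatorname{Ad}(K)$-invariance of $\operatorname{span}\{X,Y\}$ makes the horizontal energy density $|XF|^2+|YF|^2$ invariant under the twisted action, so the energy identity holds on $P^{\Gamma,\chi}$; this forces $XF=YF=0$, hence $ZF=0$, and $F$ is constant. One small caution: neither ``$X^*=-X$ on $L^2(P^{\Gamma,\chi})$'' ($X$ does not descend) nor self-adjointness of $\Lcal$ explains why the boundary terms on $\partial\mathscr F$ cancel. That face-pairing cancellation comes from the $\Gamma$-invariance of the vector field $\bar F\,\nabla_{\Hor}F$, that is, from the same $\operatorname{Ad}(K)$ argument the paper uses.
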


\begin{proof}
The twisted action preserves the horizontal distribution and its metric.
Let \(\nabla_{\Hor}\) denote the resulting horizontal gradient on
\(P^{\Gamma,\chi}\).  Although the vector fields \(X,Y\) need not
descend individually, their squared derivatives define the intrinsic
energy identity
\[
        \langle -\Lcal f,f\rangle_{L^2(P^{\Gamma,\chi})}
        =
        \int_{P^{\Gamma,\chi}}|\nabla_{\Hor}f|^2\,d\mu,
        \qquad f\in C^\infty(P^{\Gamma,\chi}).
\]
Indeed, on every local lift to \(G\), the energy density is
\(|X\widetilde f|^2+|Y\widetilde f|^2\), which is invariant under
the orthogonal changes of horizontal frame induced by the twisted action.

If \(f\in\ker(-\Lcal_{\Gamma,\chi})\), hypoellipticity implies that
\(f\) is smooth.  The energy identity gives \(\nabla_{\Hor}f=0\).
Its lift \(\widetilde f\) to \(G\) therefore satisfies
\(X\widetilde f=Y\widetilde f=0\), and hence
\[
        Z\widetilde f=[X,Y]\widetilde f=0.
\]
Since \(X,Y,Z\) span the tangent bundle of the connected group \(G\),
the function \(\widetilde f\), and therefore \(f\), is constant.
Conversely, constants are annihilated by \(\Lcal_{\Gamma,\chi}\).
\end{proof}

Removing the zero eigenvalue, we write the reduced heat trace as
\[
        \mathscr T_\chi^*(t):=\mathscr T_\chi(t)-1.
\]

The identity term in the trace formula gives the leading short-time terms explicitly. Indeed, the
\(k=0\) term in \eqref{eq:identity-contribution-character-sl2} is
\[
        \frac{\pi\Vol(M)}{4t^2}\ee^{-t/4},
\]
whereas the terms with \(k\neq0\) are exponentially small as
\(t\downarrow0\).  The hyperbolic contribution is also exponentially
small as \(t\downarrow0\), since the length spectrum has a positive
minimum.  Therefore
\begin{equation}\label{eq:reduced-heat-short-time}
\mathscr T_\chi^*(t)
=
\frac{\pi\Vol(M)}4 t^{-2}
-
\frac{\pi\Vol(M)}{16}t^{-1}
+
\left(\frac{\pi\Vol(M)}{128}-1\right)
+O(t).
\end{equation}
Let
\[
        0=\lambda_0^\chi<\lambda_1^\chi\leq\lambda_2^\chi\leq\cdots
\]
be the spectrum of \(-\Lcal_{\Gamma,\chi}\), repeated with multiplicity.  For
\(\Re s>2\), define
\begin{equation}\label{eq:zeta-sub-laplacian-definition}
        \zeta_{\Gamma,\chi}(s)
        :=
        \sum_{j=1}^{\infty}(\lambda_j^\chi)^{-s}.
\end{equation}
Equivalently,
\begin{equation}\label{eq:zeta-mellin-sub-laplacian}
        \zeta_{\Gamma,\chi}(s)
        =
        \frac1{\Gamma(s)}
        \int_0^\infty
        t^{s-1}\mathscr T_\chi^*(t)\,dt,
        \qquad \Re s>2.
\end{equation}
The convergence at infinity follows from the spectral gap above the
simple zero eigenvalue.  The convergence at zero follows from the
short-time expansion.

Set
\begin{equation}\label{eq:det-heat-coefficients}
        c_{-2}:=\frac{\pi\Vol(M)}4,
        \qquad
        c_{-1}:=-\frac{\pi\Vol(M)}{16},
        \qquad
        c_0:=\frac{\pi\Vol(M)}{128}-1.
\end{equation}

\begin{theorem}
\label{thm:zeta-continuation-sub-laplacian}
The function \(\zeta_{\Gamma,\chi}(s)\) extends meromorphically to a
neighborhood of \(s=0\), is regular at \(s=0\), and satisfies
\begin{equation}\label{eq:zeta-zero-value-sub-laplacian}
        \zeta_{\Gamma,\chi}(0)=\frac{\pi\Vol(M)}{128}-1.
\end{equation}
Moreover, near \(s=0\),
\begin{align}
\zeta_{\Gamma,\chi}(s)
&=
\frac1{\Gamma(s)}
\bigg[
\int_0^1
 t^{s-1}
\Bigl(
\mathscr T_\chi^*(t)-c_{-2}t^{-2}-c_{-1}t^{-1}-c_0
\Bigr)\,dt                                      \notag\\
&\qquad
+
\frac{c_{-2}}{s-2}
+
\frac{c_{-1}}{s-1}
+
\frac{c_0}{s}
+
\int_1^\infty t^{s-1}\mathscr T_\chi^*(t)\,dt
\bigg].
\label{eq:zeta-regularized-near-zero}
\end{align}
\end{theorem}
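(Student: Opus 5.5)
The plan is the standard Mellin-transform argument, with the required analytic input coming from the trace formula of Theorem~\ref{thm:twisted-sr-selberg-sl2}. Start from \eqref{eq:zeta-mellin-sub-laplacian}, valid for \(\Re s>2\), and split the integral at \(t=1\).

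\emph{Large times.} The integral \(\int_1^\infty t^{s-1}\mathscr T_\chi^*(t)\,dt\) is entire in \(s\). By Lemma~\ref{lem:kernel-sub-laplacian} the zero eigenvalue is simple, so \(\mathscr T_\chi^*(t)=\sum_{j\geq1}\ee^{-t\lambda_j^\chi}\). For \(t\geq1\) this is bounded by \(\ee^{-(t-1)\lambda_1^\chi}\,\mathscr T_\chi^*(1)\), and \(\lambda_1^\chi>0\). Hence the integral converges locally uniformly in \(s\in\mathbb C\).

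\emph{Small times.} Subtract the three leading terms on \((0,1)\) and integrate them explicitly:
\[
\int_0^1 t^{s-1}\bigl(c_{-2}t^{-2}+c_{-1}t^{-1}+c_0\bigr)\,dt=\frac{c_{-2}}{s-2}+\frac{c_{-1}}{s-1}+\frac{c_0}{s}, \qquad \Re s>2.
\]
This yields \eqref{eq:zeta-regularized-near-zero} on \(\Re s>2\). The remaining integral \(\int_0^1 t^{s-1}\bigl(\mathscr T_\chi^*(t)-c_{-2}t^{-2}-c_{-1}t^{-1}-c_0\bigr)\,dt\) is holomorphic for \(\Re s>-1\), provided the remainder is \(O(t)\) uniformly on \((0,1]\). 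The right side of \eqref{eq:zeta-regularized-near-zero} therefore gives the meromorphic continuation to \(\Re s>-1\), with poles inside the bracket only at \(s=0,1,2\).

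\emph{Value at zero.} Near \(s=0\) we have \(1/\Gamma(s)=s+O(s^2)\). This cancels the simple pole \(c_0/s\), and every other term in the bracket is regular at \(0\). So \(\zeta_{\Gamma,\chi}\) is regular at \(0\) with \(\zeta_{\Gamma,\chi}(0)=c_0=\frac{\pi\Vol(M)}{128}-1\), which is \eqref{eq:zeta-zero-value-sub-laplacian}.

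The main step is justifying \eqref{eq:reduced-heat-short-time} with an honest \(O(t)\) remainder on \((0,1]\).

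\emph{Identity term.} Take the \(k=0\) term of \eqref{eq:identity-contribution-character-sl2}, namely \(\frac{\pi\Vol(M)}{4}t^{-2}\ee^{-t/4}\), and Taylor expand \(\ee^{-t/4}\). This gives \(c_{-2}t^{-2}+c_{-1}t^{-1}+\frac{\pi\Vol(M)}{128}+O(t)\), using \(\frac{\pi\Vol(M)}4\cdot\frac{1}{32}=\frac{\pi\Vol(M)}{128}\). For the \(k\neq0\) terms, use \(\cosh^{-2}\leq 4\ee^{-4|k|\pi^2/t}\). Each term is then at most \(4t^{-2}\ee^{-4k^2\pi^2/t}\), and the sum over \(k\neq0\) is \(O(t^{-2}\ee^{-4\pi^2/t})=O(t)\).

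\emph{Hyperbolic term.} Bound \(|\Theta_{\Gamma,\chi}(t;\gamma^r)|\leq\vartheta(t)\). Poisson summation gives \(\vartheta(t)=O(t^{-1/2})\). Let \(\ell_0>0\) be the systole. Then \(\ee^{-r^2\ell(\gamma)^2/(4t)}\leq \ee^{-r^2\ell(\gamma)^2/8}\,\ee^{-\ell_0^2/(8t)}\) for \(t\leq1\). The prefactor series \(\sum \ell(\gamma)\sinh(r\ell(\gamma)/2)^{-1}\ee^{-r^2\ell(\gamma)^2/8}\) converges by the exponential growth bound on the number of primitive geodesics, as in the proof of Theorem~\ref{thm:twisted-sr-selberg-sl2}. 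Hence \(|\mathcal H_\chi(t)|=O(t^{-1}\ee^{-\ell_0^2/(8t)})=O(t)\).

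Finally, the constant \(-1\) in \(\mathscr T_\chi^*\) shifts the constant term to \(c_0\), completing the expansion.
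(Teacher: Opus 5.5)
Your proposal is correct and follows the paper's proof essentially step for step. You split the Mellin integral at $t=1$, use the spectral gap from Lemma~\ref{lem:kernel-sub-laplacian} to make the large-time part entire, subtract $c_{-2}t^{-2}+c_{-1}t^{-1}+c_0$ on $(0,1)$, and let $1/\Gamma(s)=s+O(s^2)$ cancel the pole $c_0/s$. Your explicit bounds on the $k\neq0$ identity terms (via $\cosh^{-2}x\le 4\ee^{-2x}$) and on $\mathcal H_\chi$ (via $|\Theta_{\Gamma,\chi}|\le\vartheta(t)=O(t^{-1/2})$ and the systole) make rigorous the short-time expansion \eqref{eq:reduced-heat-short-time}, which the paper only asserts with a brief remark.
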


\begin{proof}
Split the Mellin integral \eqref{eq:zeta-mellin-sub-laplacian} at
\(t=1\).  On the interval \((0,1)\), subtract and add the three leading terms in
\eqref{eq:reduced-heat-short-time}.  Since
\[
\mathscr T_\chi^*(t)-c_{-2}t^{-2}-c_{-1}t^{-1}-c_0=O(t),
\qquad t\downarrow0,
\]
the integral
\[
\int_0^1
 t^{s-1}
\Bigl(
\mathscr T_\chi^*(t)-c_{-2}t^{-2}-c_{-1}t^{-1}-c_0
\Bigr)\,dt
\]
is holomorphic for \(\Re s>-1\).  The subtracted terms contribute
\[
        \frac{c_{-2}}{s-2}
        +
        \frac{c_{-1}}{s-1}
        +
        \frac{c_0}{s}.
\]
On \((1,\infty)\), Lemma~\ref{lem:kernel-sub-laplacian} gives a positive
spectral gap above the zero eigenvalue, so
\[
        \mathscr T_\chi^*(t)=O(\ee^{-\lambda_1^\chi t}),
        \qquad t\to\infty.
\]
Hence
\[
        \int_1^\infty t^{s-1}\mathscr T_\chi^*(t)\,dt
\]
is entire in \(s\).  This proves \eqref{eq:zeta-regularized-near-zero}.

Finally, denoting by $\gamma_{\mathrm E}$ the Euler--Mascheroni constant,
\[
        \frac1{\Gamma(s)}=s+\gamma_{\mathrm E}s^2+O(s^3),
        \qquad s\to0,
\]
and the bracketed expression in \eqref{eq:zeta-regularized-near-zero}
has the form
\[
        \frac{c_0}{s}+O(1).
\]
Thus \(\zeta_{\Gamma,\chi}\) is regular at \(s=0\), and
\[
        \zeta_{\Gamma,\chi}(0)=c_0
        =
        \frac{\pi\Vol(M)}{128}-1.
\]
\end{proof}
Since the function \(\zeta_{\Gamma,\chi}(s)\) is regular at \(s=0\) we set the following definition.

\begin{definition}\label{def:zeta-determinant-sub-laplacian}
The reduced zeta-regularized determinant of \(-\Lcal_{\Gamma,\chi}\) is
\begin{equation}\label{eq:zeta-det-sub-laplacian}
        \det\nolimits_\zeta'(-\Lcal_{\Gamma,\chi})
        :=
        \exp\bigl(-\zeta_{\Gamma,\chi}'(0)\bigr).
\end{equation}
\end{definition}

Our goal is to give a formula for the reduced zeta-regularized determinant. We start with the following lemma.

\begin{lemma}
\label{cor:finite-part-determinant}
One has
\begin{align}
\log\det\nolimits_\zeta'(-\Lcal_{\Gamma,\chi})
&=
-
\int_0^1
\frac{
\mathscr T_\chi^*(t)-c_{-2}t^{-2}-c_{-1}t^{-1}-c_0
}{t}\,dt                                      \notag\\
&\quad
-
\int_1^\infty
\frac{\mathscr T_\chi^*(t)}{t}\,dt
+
\frac{c_{-2}}2+c_{-1}-\gamma_{\mathrm E} c_0,
\label{eq:det-finite-part-formula}
\end{align}
where \(\gamma_{\mathrm E}\) is Euler's constant.
\end{lemma}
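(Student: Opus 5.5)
The plan is to differentiate the representation \eqref{eq:zeta-regularized-near-zero} of Theorem~\ref{thm:zeta-continuation-sub-laplacian} at \(s=0\). Write the bracket there as \(F(s)=c_0/s+E(s)\), where
\[
E(s)=\int_0^1 t^{s-1}\Bigl(\mathscr T_\chi^*(t)-c_{-2}t^{-2}-c_{-1}t^{-1}-c_0\Bigr)\,dt+\frac{c_{-2}}{s-2}+\frac{c_{-1}}{s-1}+\int_1^\infty t^{s-1}\mathscr T_\chi^*(t)\,dt .
\]
By the proof of that theorem, \(E\) is holomorphic near \(s=0\): the first integral is holomorphic for \(\Re s>-1\), the last is entire, and the two rational terms are regular at \(0\).

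Next, insert the expansion \(1/\Gamma(s)=s+\gamma_{\mathrm E}s^2+O(s^3)\). This gives
\[
\zeta_{\Gamma,\chi}(s)=\bigl(s+\gamma_{\mathrm E}s^2+O(s^3)\bigr)\Bigl(\frac{c_0}{s}+E(0)+O(s)\Bigr)=c_0+\bigl(E(0)+\gamma_{\mathrm E}c_0\bigr)s+O(s^2).
\]
Hence
\[
\zeta_{\Gamma,\chi}'(0)=E(0)+\gamma_{\mathrm E}c_0 .
\]

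To evaluate \(E(0)\), set \(s=0\) in each term. The rational terms become \(c_{-2}/(0-2)=-c_{-2}/2\) and \(c_{-1}/(0-1)=-c_{-1}\). The two integrals become \(\int_0^1 t^{-1}(\cdots)\,dt\) and \(\int_1^\infty t^{-1}\mathscr T_\chi^*(t)\,dt\). Setting \(s=0\) inside the integrals is legitimate by dominated convergence. On \((0,1)\) the integrand is \(O(t^{s})\) by \eqref{eq:reduced-heat-short-time}, which is integrable uniformly for \(s\) near \(0\). On \((1,\infty)\) it is \(O(t^{s-1}\ee^{-\lambda_1^\chi t})\), which is integrable by Lemma~\ref{lem:kernel-sub-laplacian}.

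Finally, \(\log\det\nolimits_\zeta'=-\zeta_{\Gamma,\chi}'(0)\) by Definition~\ref{def:zeta-determinant-sub-laplacian}. Negating \(E(0)+\gamma_{\mathrm E}c_0\) produces exactly the two negated integrals, the term \(+c_{-2}/2+c_{-1}\), and \(-\gamma_{\mathrm E}c_0\), which is \eqref{eq:det-finite-part-formula}.

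There is no real obstacle here. The one point needing care is the bookkeeping of the Laurent expansion: the \(c_0/s\) pole combines with the \(\gamma_{\mathrm E}s^2\) term of \(1/\Gamma(s)\) to give the \(\gamma_{\mathrm E}c_0\) contribution to \(\zeta'(0)\). One should also use the exact expansion \(1/\Gamma(s)=s+\gamma_{\mathrm E}s^2+O(s^3)\) with the correct sign of \(\gamma_{\mathrm E}\).
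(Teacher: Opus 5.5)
Your proposal is correct and follows the paper's proof essentially step for step. The paper also writes the bracket as \(c_0/s+B_0+O(s)\) and multiplies by \(1/\Gamma(s)=s+\gamma_{\mathrm E}s^2+O(s^3)\) to get \(\zeta_{\Gamma,\chi}'(0)=B_0+\gamma_{\mathrm E}c_0\), with the rational terms contributing \(-c_{-2}/2-c_{-1}\) to \(B_0\).
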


\begin{proof}
Let \(B(s)\) denote the bracketed expression in
\eqref{eq:zeta-regularized-near-zero}.  From the proof of
Theorem~\ref{thm:zeta-continuation-sub-laplacian},
\[
        B(s)=\frac{c_0}{s}+B_0+O(s),
        \qquad s\to0,
\]
where
\[
\begin{aligned}
B_0
&=
\int_0^1
\frac{
\mathscr T_\chi^*(t)-c_{-2}t^{-2}-c_{-1}t^{-1}-c_0
}{t}\,dt
+
\int_1^\infty
\frac{\mathscr T_\chi^*(t)}{t}\,dt  \\
&\qquad
-
\frac{c_{-2}}2-c_{-1}.
\end{aligned}
\]
Since \(1/\Gamma(s)=s+\gamma_{\mathrm E}s^2+O(s^3)\), we get
\[
        \zeta_{\Gamma,\chi}(s)
        =
        c_0+s\bigl(B_0+\gamma_{\mathrm E}c_0\bigr)+O(s^2).
\]
Therefore
\[
        \log\det\nolimits_\zeta'(-\Lcal_{\Gamma,\chi})
        =
        -\zeta_{\Gamma,\chi}'(0)
        =
        -B_0-\gamma_{\mathrm E}c_0,
\]
which gives \eqref{eq:det-finite-part-formula}.
\end{proof}

To compute $\det\nolimits_\zeta'(-\Lcal_{\Gamma,\chi})$, we separate the
universal base-surface contribution from the lift- and
character-dependent closed-geodesic contribution.

\subsection{The universal base factor}
\label{subsec:untwisted-fredholm-factor}

Let
\[
        Z_M(t):=\Tr(\ee^{-t\Delta_M})
\]
be the heat trace of the positive Laplace--Beltrami operator on the
compact hyperbolic surface \(M=\bar\Gamma\backslash\HH\).  Write
\[
        0=\mu_0<\mu_1\leq\mu_2\leq\cdots
\]
for the spectrum of \(\Delta_M\), repeated with multiplicity, and let
\(\Delta_M'\) denote the restriction of \(\Delta_M\) to the orthogonal
complement of the constants.  We also write
\[
        \zeta_M(s)
        :=
        \Tr{}((\Delta'_M)^{-s})
        =
        \sum_{j=1}^{\infty}\mu_j^{-s},
\]
initially defined for \(\Re s>1\), and then by meromorphic continuation, see \cite[Section 4.2]{McKean1972}.

Set
\[
        \vartheta(t):=\sum_{m\in\Z}\ee^{-m^2t/4}.
\]
By Remark~\ref{rem:sr-trace-vs-surface-heat-trace},
\[
        \mathscr T_\chi(t)
        =
        \mathcal I_0(t)
        +\vartheta(t)\bigl(Z_M(t)-\mathcal I_M(t)\bigr)
        +\mathcal G_{\Gamma,\chi}(t).
\]
After subtracting the common zero eigenvalue, this becomes
\begin{equation}\label{eq:untwisted-reduced-split}
        \mathscr T_\chi^*(t)
        =
        \vartheta(t)\bigl(Z_M(t)-1\bigr)
        +
        \bigl(\vartheta(t)-1\bigr)
        +
        \bigl(\mathcal I_0(t)-\vartheta(t)\mathcal I_M(t)\bigr)
        +\mathcal G_{\Gamma,\chi}(t).
\end{equation}

We first compute the contributions of the first three terms in
\eqref{eq:untwisted-reduced-split}.  They form the universal base factor:
the first term produces the Fredholm determinant,
the second produces $(4\pi)^2$, and the third produces
$\exp(C_{\mathrm{id}}\Vol(M))$.  The final term will produce the
lift- and character-dependent closed-geodesic product.

The first term is the theta-weighted base term
\begin{equation}\label{eq:theta-weighted-base-zeta}
        \zeta_{\vartheta,M}(s)
        :=
        \frac1{\Gamma(s)}
        \int_0^\infty
        t^{s-1}
        \vartheta(t)\bigl(Z_M(t)-1\bigr)\,dt.
\end{equation}
For \(\Re s\) large enough, we have
\[
        \zeta_{\vartheta,M}(s)
        =
        \sum_{j=1}^{\infty}
        \sum_{m\in\Z}
        \left(\mu_j+\frac{m^2}{4}\right)^{-s}.
\]
The following proposition evaluates its determinant contribution directly.

\begin{proposition}
\label{prop:theta-weighted-fredholm}
The function \(\zeta_{\vartheta,M}(s)\) is regular at \(s=0\), and
\begin{equation}\label{eq:theta-weighted-fredholm-factor}
        \exp\bigl(-\zeta_{\vartheta,M}'(0)\bigr)
        =
        \exp\!\left(4\pi\zeta_M\!\left(-\frac12\right)\right)
        \detF\!\left(I-\ee^{-4\pi\sqrt{\Delta_M'}}\right)^2 ,
\end{equation}
where $\detF$ denotes the Fredholm determinant.
\end{proposition}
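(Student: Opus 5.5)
The plan is to prove the proposition one eigenvalue at a time, using the Poisson-summed form of the theta function, and then sum over the spectrum of \(\Delta_M'\). For a single \(\mu=\mu_j>0\), consider the one-dimensional Epstein-type zeta function
\[
        E_\mu(s):=\sum_{m\in\Z}\Bigl(\mu+\frac{m^2}{4}\Bigr)^{-s}
        =\frac1{\Gamma(s)}\int_0^\infty t^{s-1}\ee^{-t\mu}\vartheta(t)\,dt .
\]
Poisson summation gives
\[
        \vartheta(t)=\sqrt{\frac{4\pi}{t}}\sum_{n\in\Z}\ee^{-4\pi^2n^2/t}.
\]
With this, the \(n=0\) term contributes \(\sqrt{4\pi}\,\Gamma(s-\tfrac12)\mu^{1/2-s}/\Gamma(s)\). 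The \(n\neq0\) terms give Bessel functions \(K_{s-1/2}\), which are entire in \(s\). Differentiating at \(s=0\) with \(\Gamma(-\tfrac12)=-2\sqrt\pi\) yields, for each eigenvalue,
\[
        -E_\mu'(0)=4\pi\sqrt\mu+2\log\bigl(1-\ee^{-4\pi\sqrt\mu}\bigr).
\]
This is the classical identity \(\prod_m(\mu+m^2/4)\) ``\(=\)'' \(4\sinh^2(2\pi\sqrt\mu)\) in zeta-regularized form. In the second step I will verify that the \(n\neq0\) part at \(s=0\) reduces, via \(K_{-1/2}(x)=\sqrt{\pi/(2x)}\,\ee^{-x}\), to \(\sum_{n\ge1}\tfrac{2}{n}\ee^{-4\pi n\sqrt\mu}\), and that \(E_\mu(0)=0\).

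Next I sum over \(j\). I split \(\vartheta(t)=\sqrt{4\pi/t}+\bigl(\vartheta(t)-\sqrt{4\pi/t}\bigr)\) inside the Mellin integral \eqref{eq:theta-weighted-base-zeta}, which gives
\[
        \zeta_{\vartheta,M}(s)=\sqrt{4\pi}\,\frac{\Gamma(s-\frac12)}{\Gamma(s)}\,\zeta_M\Bigl(s-\frac12\Bigr)+R(s).
\]
Here
\[
        R(s)=\frac{1}{\Gamma(s)}\int_0^\infty t^{s-1}\Bigl(\vartheta(t)-\sqrt{4\pi/t}\Bigr)\bigl(Z_M(t)-1\bigr)\,dt .
\]
The remainder factor \(\vartheta(t)-\sqrt{4\pi/t}\) is \(O(t^{-1/2}\ee^{-4\pi^2/t})\) as \(t\downarrow0\), which kills the \(t^{-1}\) singularity of \(Z_M\). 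As \(t\to\infty\), \(Z_M(t)-1\) decays like \(\ee^{-\mu_1t}\) and the remainder factor is bounded. So \(R(s)\) is \(1/\Gamma(s)\) times an entire function, and \(-R'(0)\) is the integral
\[
        -\int_0^\infty\Bigl(\vartheta(t)-\sqrt{4\pi/t}\Bigr)\bigl(Z_M(t)-1\bigr)\frac{dt}{t}.
\]
This integral can be evaluated termwise, using Fubini justified by positivity of the Gaussian terms, as
\[
        2\sum_j\log\bigl(1-\ee^{-4\pi\sqrt{\mu_j}}\bigr)=2\log\detF\bigl(I-\ee^{-4\pi\sqrt{\Delta_M'}}\bigr).
\]
The Fredholm determinant converges because \(\sum_j\ee^{-4\pi\sqrt{\mu_j}}<\infty\) by Weyl's law.

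The main obstacle is the first term, which requires knowing the behavior of \(\zeta_M\) at \(s=-\tfrac12\). The ratio \(\Gamma(s-\tfrac12)/\Gamma(s)\) vanishes to first order at \(s=0\), with derivative \(\Gamma(-\tfrac12)=-2\sqrt\pi\). Hence the contribution to \(-\zeta_{\vartheta,M}'(0)\) is \(4\pi\zeta_M(-\tfrac12)\), and regularity at \(0\) follows, provided \(\zeta_M\) is regular at \(-\tfrac12\). I will justify this from the heat expansion of \(Z_M(t)-1\) on a surface. Its Mellin transform has poles only at \(s=1\) and at \(s=-k\) with \(k\) a nonnegative integer, coming from \(a_{-1}t^{-1}+\sum_k a_kt^k\); no half-integer powers appear, because the hyperbolic part of \(Z_M\) is exponentially small and \(\mathcal I_M\) has an expansion in integer powers. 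Thus \(\Gamma(s)\zeta_M(s)\) is holomorphic near \(-\tfrac12\), and \(\zeta_M(-\tfrac12)\) is finite, in line with \cite[Section 4.2]{McKean1972}. Combining the two pieces and exponentiating gives \eqref{eq:theta-weighted-fredholm-factor}.
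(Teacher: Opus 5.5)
Your proposal is correct and follows essentially the same route as the paper. Both arguments Poisson-sum $\vartheta$, let the $q=0$ term give $2\sqrt\pi\,\Gamma(s-\tfrac12)\Gamma(s)^{-1}\zeta_M(s-\tfrac12)$ and hence $4\pi\zeta_M(-\tfrac12)$, and evaluate the exponentially regular $q\neq0$ terms at $s=0$ via the $t^{-3/2}$ Bessel-type integral to obtain $2\log\detF\bigl(I-\ee^{-4\pi\sqrt{\Delta_M'}}\bigr)$; your per-eigenvalue warm-up and your integer-power heat-expansion argument for the regularity of $\zeta_M$ at $-\tfrac12$ are extra detail that the paper leaves to the citation of McKean.
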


\begin{proof}
We first notice that from  Poisson summation formula
\begin{equation}\label{eq:theta-poisson}
        \vartheta(t)
        =
        2\sqrt{\frac{\pi}{t}}
        \sum_{q\in\Z}
        \exp\!\left(-\frac{4\pi^2q^2}{t}\right).
\end{equation}
Substituting this into \eqref{eq:theta-weighted-base-zeta} gives
\[
\zeta_{\vartheta,M}(s)
=
\zeta_{\vartheta,M}^{(0)}(s)
+
\zeta_{\vartheta,M}^{(\neq0)}(s),
\]
where the superscript \(0\) denotes the \(q=0\) Poisson term.

For the \(q=0\) term, we obtain
\[
\begin{aligned}
\zeta_{\vartheta,M}^{(0)}(s)
&=
2\sqrt\pi\,
\frac1{\Gamma(s)}
\int_0^\infty
t^{s-\frac32}
\bigl(Z_M(t)-1\bigr)\,dt  \\
&=
2\sqrt\pi\,
\frac{\Gamma\left(s-\frac12\right)}{\Gamma(s)}
\zeta_M\!\left(s-\frac12\right),
\end{aligned}
\]
again first in its domain of absolute convergence and then by
meromorphic continuation.  Since
\[
        \frac1{\Gamma(s)}=s+O(s^2),
        \qquad
        \Gamma\left(s-\frac12\right)
        =
        \Gamma\left(-\frac12\right)+O(s)
        =
        -2\sqrt\pi+O(s),
\]
we have
\[
        \zeta_{\vartheta,M}^{(0)}(s)
        =
        -4\pi s\,\zeta_M\!\left(-\frac12\right)
        +
        O(s^2).
\]
Hence
\begin{equation}\label{eq:qzero-theta-derivative}
        \left(\zeta_{\vartheta,M}^{(0)}\right)'(0)
        =
        -4\pi\zeta_M\!\left(-\frac12\right).
\end{equation}

It remains to compute the \(q\neq0\) terms.  Using the spectral expansion
of \(Z_M(t)-1\), we write
\[
\zeta_{\vartheta,M}^{(\neq0)}(s)
=
\frac{2\sqrt\pi}{\Gamma(s)}
\sum_{q\in\Z\setminus\{0\}}
\sum_{j=1}^{\infty}
\int_0^\infty
t^{s-\frac32}
\exp\!\left(
-\mu_jt-\frac{4\pi^2q^2}{t}
\right)\,dt .
\]
For \(q\neq0\), the integral is exponentially regular at \(t=0\), and
therefore the expression multiplying \(1/\Gamma(s)\) is holomorphic at
\(s=0\).  Since \(1/\Gamma(s)=s+O(s^2)\), differentiation at zero gives
\[
\left(\zeta_{\vartheta,M}^{(\neq0)}\right)'(0)
=
2\sqrt\pi
\sum_{q\in\Z\setminus\{0\}}
\sum_{j=1}^{\infty}
\int_0^\infty
t^{-\frac32}
\exp\!\left(
-\mu_jt-\frac{4\pi^2q^2}{t}
\right)\,dt .
\]
Using the elementary integral
\[
        \int_0^\infty
        t^{-\frac32}
        \exp\!\left(-\beta t-\frac{\gamma}{t}\right)\,dt
        =
        \frac{\sqrt\pi}{\sqrt\gamma}
        \exp(-2\sqrt{\beta\gamma}),
        \qquad \beta,\gamma>0,
\]
with \(\beta=\mu_j\) and \(\gamma=4\pi^2q^2\), gives
\[
        \int_0^\infty
        t^{-\frac32}
        \exp\!\left(
        -\mu_jt-\frac{4\pi^2q^2}{t}
        \right)\,dt
        =
        \frac{1}{2\sqrt\pi\,|q|}
        \exp(-4\pi |q|\sqrt{\mu_j}).
\]
Consequently,
\begin{equation}\label{eq:qnonzero-theta-derivative}
\left(\zeta_{\vartheta,M}^{(\neq0)}\right)'(0)
=
2\sum_{q=1}^{\infty}
\frac1q
\sum_{j=1}^{\infty}
\exp(-4\pi q\sqrt{\mu_j}).
\end{equation}

Now set
\[
        Q:=\ee^{-4\pi\sqrt{\Delta_M'}}.
\]
The operator \(Q\) is trace class, and
\[
        \Tr(Q^q)
        =
        \sum_{j=1}^{\infty}\exp(-4\pi q\sqrt{\mu_j}).
\]
Therefore \eqref{eq:qnonzero-theta-derivative} becomes
\[
\left(\zeta_{\vartheta,M}^{(\neq0)}\right)'(0)
=
2\sum_{q=1}^{\infty}\frac{\Tr(Q^q)}{q}.
\]
By the standard Fredholm determinant identity
\[
        \log\detF(I-Q)
        =
        -\sum_{q=1}^{\infty}\frac{\Tr(Q^q)}{q},
\]
we get
\[
        -\left(\zeta_{\vartheta,M}^{(\neq0)}\right)'(0)
        =
        2\log\detF(I-Q).
\]
Combining this with \eqref{eq:qzero-theta-derivative} gives
\[
\begin{aligned}
        -\zeta_{\vartheta,M}'(0)
        &=
        4\pi\zeta_M\!\left(-\frac12\right)
        +
        2\log\detF\!\left(I-\ee^{-4\pi\sqrt{\Delta_M'}}\right),
\end{aligned}
\]
which is exactly \eqref{eq:theta-weighted-fredholm-factor}.
\end{proof}

The second term in \eqref{eq:untwisted-reduced-split} is 
\[
        \vartheta(t)-1
        =
        \sum_{m\in\Z\setminus\{0\}}\ee^{-m^2t/4}.
\]
Its zeta function is the classical zeta function on the circle
\begin{equation}\label{eq:pure-vertical-zeta}
        \zeta_{\mathbb S}(s)
        =
        \sum_{m\in\Z\setminus\{0\}}
        \left(\frac{m^2}{4}\right)^{-s}
        =
        2\, \cdot 4^s\zeta_{\mathrm R}(2s),
\end{equation}
where $\zeta_{\mathrm R}$ is the Riemann zeta function.
Using
\[
        \zeta_{\mathrm R}(0)=-\frac12,
        \qquad
        \zeta_{\mathrm R}'(0)=-\frac12\log(2\pi),
\]
we find
\begin{equation}\label{eq:pure-vertical-det}
        \exp\bigl(-\zeta_{\mathbb S}'(0)\bigr)
        =
        (4\pi)^2.
\end{equation}

Finally, define
\begin{equation}\label{eq:a-id-definition}
        a_{\mathrm{id}}(t)
        :=
        \frac{\mathcal I_0(t)}{\Vol(M)}
        -
        \vartheta(t)\frac{\mathcal I_M(t)}{\Vol(M)}.
\end{equation}
Its Mellin transform is
\begin{equation}\label{eq:zeta-id-definition}
        \zeta_{\mathrm{id}}(s)
        :=
        \frac1{\Gamma(s)}
        \int_0^\infty t^{s-1}a_{\mathrm{id}}(t)\,dt,
\end{equation}
continued meromorphically to a neighborhood of \(s=0\), and we set
\begin{equation}\label{eq:C-id-definition}
        C_{\mathrm{id}}
        :=
        -\zeta_{\mathrm{id}}'(0).
\end{equation}
The function \(a_{\mathrm{id}}\) only comes from the identity
contributions and does not depend on \(\Gamma\).  Thus
\(C_{\mathrm{id}}\) is a universal numerical constant; see
Proposition~\ref{prop:C-id-explicit} below.

Define the universal base-surface factor
\begin{equation}\label{eq:base-reference-determinant}
\begin{aligned}
D_{\mathrm{base}}(M)
&:=(4\pi)^2
\exp\bigl(C_{\mathrm{id}}\Vol(M)\bigr)
\exp\!\left(4\pi\zeta_M\!\left(-\frac12\right)\right)\\
&\qquad\times
\detF\!\left(I-\ee^{-4\pi\sqrt{\Delta_M'}}\right)^2.
\end{aligned}
\end{equation}
The calculations above show that this is the determinant contribution of
the first three terms in \eqref{eq:untwisted-reduced-split}.

\subsection{Lift and character dependence}
\label{subsec:relative-character-determinant}

We now compute the contribution of
\(\mathcal G_{\Gamma,\chi}\).  For a primitive class \([\gamma]\), let
\(\widetilde\gamma\in\Gamma\) denote its unique lift and abbreviate
\begin{equation}\label{eq:primitive-total-phase}
        \phi_{\Gamma,\chi}(\gamma)
        :=\kappa_\Gamma(\widetilde\gamma)
        +\chi(\widetilde\gamma)
        \quad\text{in }\R/2\pi\Z.
\end{equation}
For \(m\geq1\), define the holomorphic phase-decorated Selberg-type product
\begin{equation}\label{eq:character-selberg-product}
Z_{\Gamma,\chi}^{(m)}(s)
:=
\prod_{[\gamma]\in\mathcal P_{\bar\Gamma}}
\prod_{n=0}^{\infty}
\left(
1-\ee^{\ii m\phi_{\Gamma,\chi}(\gamma)}
\ee^{-(s+n)\ell(\gamma)}
\right),
\qquad \Re s>1.
\end{equation}
We also write \(Z_0(s)\) for the same product with phase zero.  Notice
that \(\phi_{\Gamma,\chi}\) is conjugacy invariant but need not be a group
character.  Since \(\kappa_\Gamma\in\{0,\pi\}\), the two factors with
\(\chi\) and \(-\chi\) in \eqref{eq:Rchi-selberg-product-section} are
complex conjugates at the real points \(s=\sigma_m\).
All products below use primitive conjugacy classes, with
\([\gamma]\) and \([\gamma^{-1}]\) counted separately.  The paired
factors in \eqref{eq:Rchi-fredholm-section} agree for these two classes.
Thus indexing that product by unoriented primitive closed geodesics
would require squaring each displayed paired factor.

Set
\begin{equation}\label{eq:sigma-m-definition}
        \sigma_m:=\frac{1+\sqrt{m^2+1}}2,
        \qquad m\geq1,
\end{equation}
and
\begin{equation}\label{eq:q-gamma-m-n}
        q_{\gamma,m,n}
        :=\exp\bigl(-(n+\sigma_m)\ell(\gamma)\bigr).
\end{equation}
The absolute lift-and-character factor is
\begin{equation}\label{eq:Rchi-selberg-product-section}
        \Rcal_{\Gamma,\chi}
        :=
        \prod_{m=1}^{\infty}
        \frac{
        Z_{\Gamma,\chi}^{(m)}(\sigma_m)
        Z_{\Gamma,-\chi}^{(m)}(\sigma_m)
        }{Z_0(\sigma_m)^2}.
\end{equation}
Equivalently,
\begin{equation}\label{eq:Rchi-fredholm-section}
\Rcal_{\Gamma,\chi}
=
\prod_{[\gamma]\in\mathcal P_{\bar\Gamma}}
\prod_{m=1}^{\infty}
\prod_{n=0}^{\infty}
\frac{
1-2q_{\gamma,m,n}\cos\bigl(m\phi_{\Gamma,\chi}(\gamma)\bigr)
+q_{\gamma,m,n}^2
}{(1-q_{\gamma,m,n})^2}.
\end{equation}

\begin{lemma}\label{lem:lift-character-product-convergence}
The products \eqref{eq:character-selberg-product} converge absolutely and
locally uniformly for \(\Re s>1\).  The product
\eqref{eq:Rchi-fredholm-section} converges absolutely, uniformly in
\(\chi\) on the character torus.
\end{lemma}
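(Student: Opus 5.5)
The plan is to reduce everything to one counting estimate for the length spectrum,
\[
        \sum_{[\gamma]\in\mathcal P_{\bar\Gamma}}\ee^{-\sigma\ell(\gamma)}<\infty,
        \qquad \sigma>1,
\]
which follows from the prime geodesic theorem (or the cruder bound \(\#\{[\gamma]:\ell(\gamma)\le L\}=O(\ee^{L})\), obtained from the hyperbolic term of the Selberg trace formula or from volume growth in \(\HH\)). For \eqref{eq:character-selberg-product}, write \(u_{\gamma,n}(s):=\ee^{\ii m\phi_{\Gamma,\chi}(\gamma)}\ee^{-(s+n)\ell(\gamma)}\). Then \(|u_{\gamma,n}(s)|=\ee^{-(\Re s+n)\ell(\gamma)}\), and on a compact set \(\Re s\ge\sigma>1\) we get
\[
        \sum_{[\gamma]}\sum_{n\ge0}|u_{\gamma,n}|
        \le
        \sum_{[\gamma]}\frac{\ee^{-\sigma\ell(\gamma)}}{1-\ee^{-\ell_0}},
\]
where \(\ell_0>0\) is the systole. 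This is finite, so the product converges absolutely and locally uniformly by the standard criterion \(\sum|u|<\infty\Rightarrow\prod(1-u)\) converges absolutely. The bound is uniform in the phase, since the phase has modulus one.

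For \eqref{eq:Rchi-fredholm-section}, write each factor as
\[
        \frac{(1-\ee^{\ii m\phi}q)(1-\ee^{-\ii m\phi}q)}{(1-q)^2}
        =1+\frac{2q\,(1-\cos(m\phi))}{(1-q)^2},
        \qquad q=q_{\gamma,m,n}.
\]
This factor is \(\ge1\), and its deviation from \(1\) is at most
\[
        \frac{4q_{\gamma,m,n}}{(1-\ee^{-\ell_0})^2}.
\]
This bound does not depend on \(\chi\). It therefore suffices to show
\[
        S:=\sum_{[\gamma]}\sum_{m\ge1}\sum_{n\ge0}
        \ee^{-(n+\sigma_m)\ell(\gamma)}<\infty.
\]
Summing over \(n\) gives at most \((1-\ee^{-\ell_0})^{-1}\sum_{[\gamma]}\sum_{m\ge1}\ee^{-\sigma_m\ell(\gamma)}\). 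Since \(\sigma_m\ge(1+m)/2\) and \(\sigma_m\ge\sigma_1=(1+\sqrt2)/2>1\), we split \(\sigma_m=\sigma_1+(\sigma_m-\sigma_1)\) and use \(\sigma_m-\sigma_1\ge (m-1)/2-c\) for a constant \(c\). The sum over \(m\) is then bounded by
\[
        C\,\ee^{-\sigma_1\ell(\gamma)}
        \sum_{m\ge1}\ee^{-(m-1)\ell_0/2},
\]
a geometric series, so \(S\le C'\sum_{[\gamma]}\ee^{-\sigma_1\ell(\gamma)}<\infty\) by the counting estimate with \(\sigma=\sigma_1>1\). This gives absolute convergence, and the Weierstrass M-test with \(\chi\)-independent majorants gives uniformity on the character torus.

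The main obstacle is the counting estimate and making sure the exponent stays strictly above \(1\) uniformly in \(m\). The latter holds because \(\sigma_m\ge\sigma_1>1\). For the former I would first try citing the prime geodesic theorem directly; failing that, I would derive \(\#\{\ell(\gamma)\le L\}\le C\ee^{L}\) from positivity of the hyperbolic term \(\mathcal H_M(t)\le Z_M(t)\), which is finite.

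Finally, the equality of \eqref{eq:Rchi-selberg-product-section} with \eqref{eq:Rchi-fredholm-section} would follow by regrouping, which is justified by this absolute convergence. The \(\pm\chi\) products combine into \((1-\ee^{\ii m\phi}q)(1-\ee^{-\ii m\phi}q)\) because \(\kappa_\Gamma\in\{0,\pi\}\) gives \(\phi_{\Gamma,-\chi}=-\phi_{\Gamma,\chi}\) mod \(2\pi\).
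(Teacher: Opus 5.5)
Your proposal is correct and follows the paper's proof essentially step by step. It uses the coarse bound $N_{\mathcal P}(R)\le C\ee^R$, the positive systole, the geometric series in $n$, and the linear growth of $\sigma_m$ with $\sigma_1>1$ to produce a $\chi$-independent majorant $C\sum_{[\gamma]}\ee^{-\sigma_1\ell(\gamma)}$. The differences are cosmetic: you bound $F-1=2q(1-\cos\alpha)/(1-q)^2$ instead of $\log F$. Also, your step via $\sigma_m-\sigma_1\ge(m-1)/2-c$ leaves an $\ee^{c\ell}$ factor at $m=1$; it is cleaner to use $\sigma_m-\sigma_1\ge0$ and monotonicity in $\ell\ge\ell_0$, giving $\sum_m\ee^{-(\sigma_m-\sigma_1)\ell}\le\sum_m\ee^{-(\sigma_m-\sigma_1)\ell_0}<\infty$.
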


\begin{proof}
Let
\[
        N_{\mathcal P}(R)
        :=\#\{[\gamma]\in\mathcal P_{\bar\Gamma}:
        \ell(\gamma)\leq R\}.
\]
The coarse prime-geodesic bound
\begin{equation}\label{eq:coarse-prime-geodesic-bound}
        N_{\mathcal P}(R)\leq C\ee^R,
        \qquad R\geq1,
\end{equation}
implies that
\begin{equation}\label{eq:length-exp-sum-convergence}
        \sum_{[\gamma]\in\mathcal P_{\bar\Gamma}}
        \ee^{-\sigma\ell(\gamma)}<\infty,
        \qquad \sigma>1.
\end{equation}
Moreover, compactness gives a positive systole
\begin{equation}\label{eq:systole-positive}
        \ell_0:=\inf_{[\gamma]\in\mathcal P_{\bar\Gamma}}
        \ell(\gamma)>0.
\end{equation}
For \(\Re s=\sigma>1\), the absolute sum of the elementary factors in
\eqref{eq:character-selberg-product} is bounded by
\[
\sum_{[\gamma]}\sum_{n=0}^{\infty}
\ee^{-(\sigma+n)\ell(\gamma)}
\leq
\frac{1}{1-\ee^{-\ell_0}}
\sum_{[\gamma]}\ee^{-\sigma\ell(\gamma)},
\]
which proves the first assertion and non-vanishing in \(\Re s>1\).

For \(0<q<1\) and \(\alpha\in\R\), set
\[
        F(q,\alpha)
        :=\frac{1-2q\cos\alpha+q^2}{(1-q)^2}.
\]
Then
\begin{equation}\label{eq:log-F-expansion}
        \log F(q,\alpha)
        =2\sum_{r=1}^{\infty}
        \frac{1-\cos(r\alpha)}{r}q^r,
\end{equation}
and, since
\(q_{\gamma,m,n}\leq q_0:=\ee^{-\sigma_1\ell_0}<1\),
\begin{equation}\label{eq:log-F-linear-bound}
        0\leq\log F(q_{\gamma,m,n},\alpha)
        \leq\frac{4}{1-q_0}q_{\gamma,m,n}.
\end{equation}
The sequence \(\sigma_m\) grows linearly and satisfies
\(\sigma_1=(1+\sqrt2)/2>1\).  Hence
\[
\sum_{m=1}^{\infty}\ee^{-\sigma_m\ell}
\leq C\ee^{-\sigma_1\ell},
\qquad \ell\geq\ell_0,
\]
and therefore
\[
\sum_{[\gamma]}\sum_{m=1}^{\infty}\sum_{n=0}^{\infty}
q_{\gamma,m,n}
\leq C\sum_{[\gamma]}\ee^{-\sigma_1\ell(\gamma)}<\infty.
\]
Together with \eqref{eq:log-F-linear-bound}, this proves the second
assertion.  The majorant is independent of \(\chi\).
\end{proof}

The logarithm of \(\Rcal_{\Gamma,\chi}\) is consequently the absolutely
convergent series
\begin{equation}\label{eq:log-Rchi-absolute-series}
\begin{aligned}
\log\Rcal_{\Gamma,\chi}
&=2\sum_{m=1}^{\infty}
\sum_{[\gamma]\in\mathcal P_{\bar\Gamma}}
\sum_{r=1}^{\infty}
\frac{1-\cos\bigl(rm\phi_{\Gamma,\chi}(\gamma)\bigr)}{r}
\frac{\ee^{-r\sigma_m\ell(\gamma)}}
{1-\ee^{-r\ell(\gamma)}}.
\end{aligned}
\end{equation}

\begin{proposition}[Closed-geodesic determinant contribution]
\label{prop:geodesic-determinant-contribution}
The Mellin transform of \(\mathcal G_{\Gamma,\chi}\) is regular at
\(s=0\), and
\begin{equation}\label{eq:geodesic-determinant-factor}
        \exp\!\left(
        -\left.\frac{d}{ds}\right|_{s=0}
        \frac1{\Gamma(s)}\int_0^\infty
        t^{s-1}\mathcal G_{\Gamma,\chi}(t)\,dt
        \right)
        =\Rcal_{\Gamma,\chi}.
\end{equation}
\end{proposition}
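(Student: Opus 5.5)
Write $\mathcal G_{\Gamma,\chi}(t)$ explicitly from \eqref{eq:geodesic-correction-heat} and \eqref{eq:theta-character-sl2}. The $m=0$ modes cancel, and pairing $\pm m$ gives
\[
\mathcal G_{\Gamma,\chi}(t)
=
\frac{\ee^{-t/4}}{4\sqrt{\pi t}}
\sum_{[\gamma]}\sum_{r\ge1}\sum_{m\ge1}
2\ee^{-m^2t/4}\bigl(\cos(rm\phi_{\Gamma,\chi}(\gamma))-1\bigr)
\frac{\ell(\gamma)}{\sinh(r\ell(\gamma)/2)}
\ee^{-r^2\ell(\gamma)^2/(4t)}.
\]
By the bound used in the proof of Theorem~\ref{thm:twisted-sr-selberg-sl2}, this triple series converges absolutely for each $t>0$. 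Each summand is $O(\ee^{-c/t})$ as $t\downarrow0$, with $c\ge\ell_0^2/4$, and decays like $\ee^{-(1+m^2)t/4}$ as $t\to\infty$.

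The plan is to integrate term by term in the Mellin transform. For each summand, $\int_0^\infty t^{s-3/2}\exp(-\beta t-\gamma/t)\,dt$ is entire in $s$, with $\beta=(1+m^2)/4$ and $\gamma=r^2\ell^2/4$. Multiplied by $1/\Gamma(s)=s+O(s^2)$, each term vanishes at $s=0$. Its derivative there is the value of the integral at $s=0$, which by the elementary integral in the proof of Proposition~\ref{prop:theta-weighted-fredholm} equals
\[
\frac{\sqrt\pi}{\sqrt\gamma}\,\ee^{-2\sqrt{\beta\gamma}}
=\frac{2\sqrt\pi}{r\ell(\gamma)}\,\ee^{-r\ell(\gamma)\sqrt{m^2+1}/2}.
\]
Combining this with the prefactor $\ell(\gamma)/(4\sqrt\pi\,\sinh(r\ell/2))$ and the factor $2(\cos-1)$, each $(\gamma,r,m)$ term contributes
\[
\frac{\cos(rm\phi)-1}{r}\,\frac{\ee^{-r\ell\sqrt{m^2+1}/2}}{\sinh(r\ell/2)}
=
2\,\frac{\cos(rm\phi)-1}{r}\,\frac{\ee^{-r\sigma_m\ell}}{1-\ee^{-r\ell}}
\]
to the derivative at $0$. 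Here I use $1/\sinh(x/2)=2\ee^{-x/2}/(1-\ee^{-x})$ and $\sigma_m=(1+\sqrt{m^2+1})/2$. Negating and summing gives exactly \eqref{eq:log-Rchi-absolute-series}, which yields \eqref{eq:geodesic-determinant-factor} after exponentiating.

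The main obstacle is justifying the interchange of the $(\gamma,r,m)$ sum with the Mellin integral and with differentiation at $s=0$. I will bound $\int_0^\infty t^{\sigma-3/2}\ee^{-\beta t-\gamma/t}\,dt$ uniformly for $s$ in a small disc $|s|\le\varepsilon$. Using a Bessel-type estimate, $\int_0^\infty t^{a-1}\ee^{-\beta t-\gamma/t}\,dt=2(\gamma/\beta)^{a/2}K_a(2\sqrt{\beta\gamma})$, together with $K_a(x)\le C_a\ee^{-x}\max(1,x^{-|a|})$ for $|a|\le1$, each term is bounded by $C\,\ell^{-1}\ee^{-r\ell\sqrt{m^2+1}/2}$ times harmless polynomial factors. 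This holds because $r\ell\ge\ell_0$ and $\sqrt{m^2+1}\ge1$. The prefactor $\ell/\sinh(r\ell/2)\lesssim\ell\,\ee^{-r\ell/2}$ then gives a majorant $\sum_{\gamma,r,m}\ee^{-r\sigma_m\ell(\gamma)}$ up to polynomial factors in $r\ell$. This majorant is finite by the argument of Lemma~\ref{lem:lift-character-product-convergence}, since $\sigma_1>1$ and the prime geodesic bound holds, with any polynomial loss absorbed by shrinking $\sigma_1$ to some $\sigma'\in(1,\sigma_1)$.

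With this majorant, dominated convergence shows three things. The Mellin transform of $\mathcal G_{\Gamma,\chi}$ converges, so no short-time subtraction is needed. The resulting function of $s$ is holomorphic near $0$ as a uniform limit of holomorphic functions. Its derivative at $0$ is therefore computed term by term, which completes the argument.
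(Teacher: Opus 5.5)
Your proposal is correct and follows essentially the same route as the paper: you integrate term by term using the elementary $t^{-3/2}$ integral, justify the interchange and regularity at $s=0$ with an absolute majorant of the form $\sum_{\gamma,r,m}\ee^{-r\sigma_m\ell(\gamma)}$ (finite by the prime-geodesic bound and $\sigma_1>1$), and then use $1/\Gamma(s)=s+O(s^2)$ and the pairing $m\leftrightarrow -m$ to recover \eqref{eq:log-Rchi-absolute-series}. The differences are cosmetic. You pair $\pm m$ before integrating rather than after, and you make the uniform-in-$s$ estimate explicit through $K_{s-1/2}$, whereas the paper only sketches it as ``a small change in the exponent.''
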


\begin{proof}
By \eqref{eq:geodesic-correction-heat},
\[
\Theta_{\Gamma,\chi}(t;\gamma^r)-\vartheta(t)
=\sum_{m\in\Z\setminus\{0\}}
\ee^{-m^2t/4}
\left(
\ee^{-\ii mr\phi_{\Gamma,\chi}(\gamma)}-1
\right).
\]
Before interchanging the sums and the heat integral, use
\(|\ee^{-\ii\alpha}-1|\leq2\) and the elementary integral
\[
\int_0^\infty t^{-3/2}
\exp\!\left(-\frac{m^2+1}{4}t
-\frac{r^2\ell^2}{4t}\right)\,dt
=\frac{2\sqrt\pi}{r\ell}
\exp\!\left(-\frac{r\ell}{2}\sqrt{m^2+1}\right).
\]
The resulting absolute majorant is a constant times
\begin{equation}\label{eq:relative-fubini-majorant}
\sum_{m=1}^{\infty}\sum_{[\gamma]}
\sum_{r=1}^{\infty}
\frac{\ee^{-\sigma_mr\ell(\gamma)}}
{r(1-\ee^{-r\ell(\gamma)})},
\end{equation}
Indeed,
\[
\sum_{r=1}^{\infty}
\frac{\ee^{-\sigma_mr\ell}}
{r(1-\ee^{-r\ell})}
\leq
\frac{-\log(1-\ee^{-\sigma_m\ell})}{1-\ee^{-\ell_0}}
\leq C\ee^{-\sigma_m\ell},
\qquad \ell\geq\ell_0,
\]
so \eqref{eq:relative-fubini-majorant} is finite by
\eqref{eq:length-exp-sum-convergence} and the linear growth of
\(\sigma_m\).  The same estimate,
with a small change in the exponent, is locally uniform for \(s\) near
zero.  It justifies the interchange and proves regularity.

Since \(1/\Gamma(s)=s+O(s^2)\), differentiation at zero gives
\begin{align*}
&-\left.\frac{d}{ds}\right|_{s=0}
\frac1{\Gamma(s)}\int_0^\infty
t^{s-1}\mathcal G_{\Gamma,\chi}(t)\,dt\\
&\quad=
-\sum_{m\in\Z\setminus\{0\}}
\sum_{[\gamma]}\sum_{r=1}^{\infty}
\frac{\ee^{-\ii mr\phi_{\Gamma,\chi}(\gamma)}-1}{r}
\frac{\ee^{-\sigma_{|m|}r\ell(\gamma)}}
{1-\ee^{-r\ell(\gamma)}}.
\end{align*}
Pairing \(m\) with \(-m\) yields precisely
\eqref{eq:log-Rchi-absolute-series}, which proves
\eqref{eq:geodesic-determinant-factor}.
\end{proof}

\begin{theorem}
\label{thm:main-twisted-det}
For every character \(\chi:\Gamma\to\R/2\pi\Z\),
\begin{equation}\label{eq:main-twisted-det-fredholm}
\begin{aligned}
\det\nolimits_\zeta'(-\Lcal_{\Gamma,\chi})
&=(4\pi)^2
\exp\bigl(C_{\mathrm{id}}\Vol(M)\bigr)
\exp\!\left(4\pi\zeta_M\!\left(-\frac12\right)\right)\\
&\quad\times
\detF\!\left(I-\ee^{-4\pi\sqrt{\Delta_M'}}\right)^2
\Rcal_{\Gamma,\chi}.
\end{aligned}
\end{equation}
\end{theorem}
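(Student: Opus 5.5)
The proof assembles the pieces already established, using additivity of $-\frac{d}{ds}\big|_{s=0}$ of Mellin transforms. Start from the splitting \eqref{eq:untwisted-reduced-split} of the reduced heat trace $\mathscr T_\chi^*(t)$ into four terms:
\[
\vartheta(t)\bigl(Z_M(t)-1\bigr),\qquad \vartheta(t)-1,\qquad \Vol(M)\,a_{\mathrm{id}}(t),\qquad \mathcal G_{\Gamma,\chi}(t).
\]
For each term I will form the regularized Mellin transform $\frac1{\Gamma(s)}\int_0^\infty t^{s-1}(\cdot)\,dt$, continued to a neighborhood of $s=0$. The aim is to show that $\zeta_{\Gamma,\chi}(s)$ equals the sum of these four functions near $s=0$. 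The determinant then factorizes as the product of the four individual factors. These factors are already identified:
\begin{itemize}
\item Proposition~\ref{prop:theta-weighted-fredholm} gives $\exp(4\pi\zeta_M(-\tfrac12))\,\detF(I-\ee^{-4\pi\sqrt{\Delta_M'}})^2$;
\item \eqref{eq:pure-vertical-det} gives $(4\pi)^2$;
\item definition \eqref{eq:C-id-definition}, scaled by $\Vol(M)$, gives $\exp(C_{\mathrm{id}}\Vol(M))$;
\item Proposition~\ref{prop:geodesic-determinant-contribution} gives $\Rcal_{\Gamma,\chi}$.
\end{itemize}

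The one real issue is justifying additivity, since the individual Mellin integrals need not converge on a common strip. I would work with the finite-part representation of Lemma~\ref{cor:finite-part-determinant} rather than with the Mellin integrals directly. That representation is linear in the heat-type function once each piece has its own small-time asymptotic expansion $\sum_{k\in\{-2,-1,0\}} c_k^{(i)}t^k+O(t^{\varepsilon})$ and controlled behavior at infinity. The $t^{-3/2},t^{-1/2}$ terms may appear in individual pieces, for instance from $\vartheta\sim 2\sqrt{\pi/t}$. So I would generalize the finite-part formula to half-integer exponents; the defining formula $-\frac{d}{ds}\big|_{0}$ of the regularized Mellin transform is linear in any case. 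For $t\to\infty$, the pieces do not individually decay:
\begin{itemize}
\item $\vartheta(t)-1$ decays like $\ee^{-t/4}$;
\item $\vartheta(Z_M-1)$ decays like $\ee^{-\mu_1 t}$;
\item $a_{\mathrm{id}}$ and $\mathcal G_{\Gamma,\chi}$ carry the factor $\ee^{-t/4}$ from the identity and hyperbolic terms, so they decay exponentially too.
\end{itemize}
Thus every piece is exponentially decaying at infinity, and each is $O(t^{-2})$ at zero with an asymptotic expansion: $\mathcal G$ is exponentially small at $0$, and $a_{\mathrm{id}}$ inherits expansions from $\mathcal I_0$ and $\vartheta\mathcal I_M$. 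Each Mellin transform is therefore meromorphic near $0$. On the common half-plane $\Re s>2$ all four integrals converge absolutely and sum to \eqref{eq:zeta-mellin-sub-laplacian}. By uniqueness of meromorphic continuation, the identity $\zeta_{\Gamma,\chi}=\sum_i\zeta^{(i)}$ persists near $s=0$.

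Next I would check regularity at $s=0$ of each summand. Regularity of the first, second and fourth is stated in the earlier results. For $\zeta_{\mathrm{id}}$, regularity follows because $1/\Gamma(s)$ kills any simple pole of the bracket. A double pole would only arise from a $t^0\log t$ term, and the expansion contains no logarithms. With regularity in hand, differentiation at $0$ is additive. Exponentiating $-\zeta'_{\Gamma,\chi}(0)=\sum_i -\zeta^{(i)\prime}(0)$ yields \eqref{eq:main-twisted-det-fredholm}.

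The main obstacle I expect is the bookkeeping in the identity piece. One must confirm that $\mathcal I_0(t)-\vartheta(t)\mathcal I_M(t)$ admits a clean power expansion at $t=0$, possibly including half-integer powers from $\vartheta$, with no logarithmic terms. Otherwise $C_{\mathrm{id}}$ would be ill-defined. I would verify this by Poisson-expanding $\vartheta$ and using the standard asymptotic series of $\int\ee^{-t\lambda^2}\lambda\tanh(\pi\lambda)\,d\lambda$.
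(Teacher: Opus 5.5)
Your proposal follows the paper's proof: split $\mathscr T_\chi^*$ as in \eqref{eq:untwisted-reduced-split}, identify $\zeta_{\Gamma,\chi}$ with the sum of the four Mellin transforms on $\Re s>2$, continue to $s=0$, and add the derivatives already computed in Proposition~\ref{prop:theta-weighted-fredholm}, \eqref{eq:pure-vertical-det}, \eqref{eq:C-id-definition} and Proposition~\ref{prop:geodesic-determinant-contribution}. Your extra check that $\zeta_{\mathrm{id}}$ is regular at $0$ is fine; it also follows by subtraction, since the other four functions are regular there.

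One justification needs correcting. The prefactor $\ee^{-t/4}$ does not by itself make $\mathcal G_{\Gamma,\chi}$ decay, because $\mathcal H_\chi(t)=\mathscr T_\chi(t)-\mathcal I_0(t)$ and $\vartheta(t)\mathcal H_M(t)=\vartheta(t)\bigl(Z_M(t)-\mathcal I_M(t)\bigr)$ both tend to $1$ as $t\to\infty$. The decay comes instead from the cancellation $|\Theta_{\Gamma,\chi}(t;\gamma^r)-\vartheta(t)|\le 2(\vartheta(t)-1)$, which gives $|\mathcal G_{\Gamma,\chi}(t)|\le 2(\vartheta(t)-1)\mathcal H_M(t)=O(\ee^{-t/4})$.
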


\begin{proof}
The decomposition \eqref{eq:untwisted-reduced-split} gives, first in a
common half-plane of convergence and then by continuation,
\[
\zeta_{\Gamma,\chi}(s)
=\zeta_{\vartheta,M}(s)+\zeta_{\mathbb S}(s)
+\Vol(M)\zeta_{\mathrm{id}}(s)
+\frac1{\Gamma(s)}\int_0^\infty
t^{s-1}\mathcal G_{\Gamma,\chi}(t)\,dt.
\]
The first three terms give \(D_{\mathrm{base}}(M)\), and
Proposition~\ref{prop:geodesic-determinant-contribution} gives
\(\Rcal_{\Gamma,\chi}\).
\end{proof}

Taking \(\chi=0\) gives the corrected untwisted formula.
\begin{corollary}\label{thm:untwisted-fredholm-determinant}
\begin{equation}\label{eq:untwisted-fredholm-determinant}
        \det\nolimits_\zeta'(-\Lcal_{\Gamma,0})
        =D_{\mathrm{base}}(M)\Rcal_{\Gamma,0}.
\end{equation}
In particular, the untwisted determinant generally depends on the chosen
lift \(\Gamma<\SL(2,\R)\).
More explicitly, if
\(\varepsilon_\Gamma(\gamma):=\operatorname{sgn}\Tr(\widetilde\gamma)\),
then
\begin{equation}\label{eq:untwisted-lift-factor}
\Rcal_{\Gamma,0}
=\prod_{[\gamma],m,n}
\left(
\frac{1-\varepsilon_\Gamma(\gamma)^m q_{\gamma,m,n}}
{1-q_{\gamma,m,n}}
\right)^2.
\end{equation}
Only negative-trace primitive lifts and odd values of \(m\) contribute
non-trivially to this product.
\end{corollary}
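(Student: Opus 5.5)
The first identity is Theorem~\ref{thm:main-twisted-det} specialized to \(\chi=0\). Since \(D_{\mathrm{base}}(M)\) was defined in \eqref{eq:base-reference-determinant} exactly as the product of the first four factors in \eqref{eq:main-twisted-det-fredholm}, substituting \(\chi=0\) gives \eqref{eq:untwisted-fredholm-determinant} at once. The real work is to make \(\Rcal_{\Gamma,0}\) explicit and to read off which factors are nontrivial.

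For \(\chi=0\), definition \eqref{eq:primitive-total-phase} gives \(\phi_{\Gamma,0}(\gamma)=\kappa_\Gamma(\widetilde\gamma)\in\{0,\pi\}\), so \(\ee^{\ii m\phi}=\varepsilon_\Gamma(\gamma)^m\) and \(\cos(m\phi)=\varepsilon_\Gamma(\gamma)^m\), where \(\varepsilon_\Gamma(\gamma)=\operatorname{sgn}\Tr(\widetilde\gamma)\) by \eqref{eq:central-phase}. The numerator in \eqref{eq:Rchi-fredholm-section} is then
\[
1-2\varepsilon_\Gamma(\gamma)^m q_{\gamma,m,n}+q_{\gamma,m,n}^2
=\bigl(1-\varepsilon_\Gamma(\gamma)^m q_{\gamma,m,n}\bigr)^2,
\]
using \(\varepsilon^{2m}=1\). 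Thus each elementary factor becomes the square of \((1-\varepsilon^m q)/(1-q)\), which is \eqref{eq:untwisted-lift-factor}. Rearranging the triple product is legitimate because Lemma~\ref{lem:lift-character-product-convergence} gives absolute convergence. The factor equals \(1\) exactly when \(\varepsilon_\Gamma(\gamma)^m=1\), that is, when the lift has positive trace or \(m\) is even. So only negative-trace primitive lifts with odd \(m\) contribute. For those, the factor is \(\bigl((1+q)/(1-q)\bigr)^2>1\).

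For the claim that the determinant generally depends on the lift, observe that \(D_{\mathrm{base}}(M)\) depends only on \(M\). Consequently two lifts \(\Gamma,\Gamma'\) of the same \(\bar\Gamma\) give the same determinant only if \(\Rcal_{\Gamma,0}=\Rcal_{\Gamma',0}\). Every factor of \(\Rcal_{\Gamma,0}\) is \(\geq1\), and it equals \(1\) precisely when all primitive lifts have positive trace. So any lift that has some negative-trace primitive element gives \(\Rcal_{\Gamma,0}>1\), and hence a strictly larger determinant than a lift (if one exists) whose primitive lifts all have positive trace.

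The main obstacle is producing such a pair of lifts. Lifts of \(\bar\Gamma\) form a torsor under \(\operatorname{Hom}(\bar\Gamma,\{\pm1\})\). Given a lift \(\Gamma\) and a nontrivial sign character \(\epsilon\), the twisted lift \(\Gamma'=\{\epsilon(\gamma)\widetilde\gamma\}\) differs from \(\Gamma\) in the sign of some primitive element. I would therefore compare \(\Gamma\) with \(\Gamma'\) and show that \(\Rcal_{\Gamma,0}\ne\Rcal_{\Gamma',0}\). If a full inequality is hard to get, I would rely on the qualifier ``generally'' and exhibit just one such pair, for instance by taking \(\epsilon\) so that some primitive class changes sign. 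Then I would check that the infinite products differ, using the strict monotonicity of \(\log\bigl((1+q)/(1-q)\bigr)\) on the explicit series \eqref{eq:log-Rchi-absolute-series}.
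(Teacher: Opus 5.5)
Your proof of the displayed identities is exactly the paper's argument. The paper proves the corollary simply by setting $\chi=0$ in Theorem~\ref{thm:main-twisted-det}. Then $\phi_{\Gamma,0}=\kappa_\Gamma\in\{0,\pi\}$ turns the numerator of \eqref{eq:Rchi-fredholm-section} into $(1-\varepsilon_\Gamma(\gamma)^m q_{\gamma,m,n})^2$, which gives \eqref{eq:untwisted-lift-factor} and the odd-$m$/negative-trace criterion. The paper offers no further proof of ``generally depends on the chosen lift''. It reads this off from the visible dependence of \eqref{eq:untwisted-lift-factor} on $\varepsilon_\Gamma$. Your attempt to make that claim rigorous does not work, for two reasons.

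\textbf{The comparison lift does not exist.} A lift ``whose primitive lifts all have positive trace'' never occurs. Since $M$ has genus at least $2$, it contains a separating simple closed geodesic $c$ cutting off a one-holed torus, represented by $[a,b]$ with $a,b$ simple closed geodesics meeting once. Its lift in any $\Gamma$ is $[\widetilde a,\widetilde b]$, which does not depend on the signs of $\widetilde a,\widetilde b$. Since the axes of $a$ and $b$ cross, $\Tr[\widetilde a,\widetilde b]<2$, and since $c$ is hyperbolic, $\Tr[\widetilde a,\widetilde b]<-2$. So every lift has a negative-trace primitive class, and $\Rcal_{\Gamma,0}>1$ for \emph{every} lift. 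Your dichotomy is therefore vacuous. Combined with this observation, your computation does prove a true, weaker fact: the untwisted determinant never reduces to $D_{\mathrm{base}}(M)$.

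\textbf{Monotonicity cannot close the sign-character fallback.} Passing from $\Gamma$ to $\Gamma_\epsilon$ flips the trace sign of every class with $\epsilon(\gamma)=-1$. This coset always contains primitive classes of both signs: apply $\Tr(AB)+\Tr(AB^{-1})=\Tr A\,\Tr B$ with $B=[\widetilde a,\widetilde b]$, which lies in $\ker\epsilon$, and pass to primitive roots, which occur with odd exponent. By the consistency remark, $\log(\Rcal_{\Gamma_\epsilon,0}/\Rcal_{\Gamma,0})$ is \eqref{eq:relative-log-corrected} with $\chi=\eta_\epsilon$. It is thus a difference of two non-empty positive series with no definite sign, as the paper itself notes after Proposition~\ref{prop:relative-determinant-character}. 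It can even vanish. If some $\hat g\in\SL(2,\R)$ inducing an isometry of $M$ satisfies $\hat g\Gamma\hat g^{-1}=\Gamma_\epsilon$, left translation by $\hat g$ makes $P^{\Gamma,0}$ and $P^{\Gamma_\epsilon,0}$ isometric, hence isospectral.

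So you should either supply an actual argument that this series is nonzero for some $\epsilon$ on some surface, or treat the lift dependence, as the paper does, as an informal reading of the explicit formula.
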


For a fixed lift, the character dependence is a quotient of the absolute
factors.
\begin{proposition}[Relative determinant]
\label{prop:relative-determinant-character}
\begin{equation}\label{eq:relative-determinant-Rchi}
\frac{\det\nolimits_\zeta'(-\Lcal_{\Gamma,\chi})}
{\det\nolimits_\zeta'(-\Lcal_{\Gamma,0})}
=\frac{\Rcal_{\Gamma,\chi}}{\Rcal_{\Gamma,0}}.
\end{equation}
Equivalently,
\begin{equation}\label{eq:relative-product-explicit}
\frac{\Rcal_{\Gamma,\chi}}{\Rcal_{\Gamma,0}}
=\prod_{[\gamma],m,n}
\frac{
1-2q_{\gamma,m,n}\cos\bigl(m\phi_{\Gamma,\chi}(\gamma)\bigr)
+q_{\gamma,m,n}^2
}{
1-2q_{\gamma,m,n}\cos\bigl(m\kappa_\Gamma(\widetilde\gamma)\bigr)
+q_{\gamma,m,n}^2
},
\end{equation}
where the product ranges over
\([\gamma]\in\mathcal P_{\bar\Gamma}\), \(m\geq1\), and \(n\geq0\).
Its logarithm has the equivalent absolutely convergent form
\begin{equation}\label{eq:relative-log-corrected}
\begin{aligned}
\log\frac{\Rcal_{\Gamma,\chi}}{\Rcal_{\Gamma,0}}
&=2\sum_{m=1}^{\infty}\sum_{[\gamma]}
\sum_{r=1}^{\infty}
\frac{
\cos\bigl(rm\kappa_\Gamma(\widetilde\gamma)\bigr)
-\cos\bigl(rm\phi_{\Gamma,\chi}(\gamma)\bigr)
}{r}
\frac{\ee^{-r\sigma_m\ell(\gamma)}}
{1-\ee^{-r\ell(\gamma)}}\\
&=2\sum_{m=1}^{\infty}\sum_{[\gamma]}
\sum_{r=1}^{\infty}
\varepsilon_\Gamma(\gamma)^{mr}
\frac{1-\cos\bigl(mr\chi(\widetilde\gamma)\bigr)}{r}
\frac{\ee^{-r\sigma_m\ell(\gamma)}}
{1-\ee^{-r\ell(\gamma)}},
\end{aligned}
\end{equation}
where \(\varepsilon_\Gamma(\gamma)=\operatorname{sgn}
\Tr(\widetilde\gamma)\).  This real series need not be non-negative.
\end{proposition}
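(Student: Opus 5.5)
The plan is to derive all three displayed identities directly from Theorem~\ref{thm:main-twisted-det} and the explicit product \eqref{eq:Rchi-fredholm-section}.

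\emph{Step 1: the ratio of determinants.} Apply Theorem~\ref{thm:main-twisted-det} once for \(\chi\) and once for \(\chi=0\), with the same lift \(\Gamma\). The base factor \(D_{\mathrm{base}}(M)\) in \eqref{eq:base-reference-determinant} involves only \(M\), \(\Vol(M)\) and the universal constant \(C_{\mathrm{id}}\). So it is the same in both cases, it is positive, and it cancels in the quotient. What remains is \eqref{eq:relative-determinant-Rchi}.

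\emph{Step 2: the explicit product.} Write \(\Rcal_{\Gamma,\chi}\) and \(\Rcal_{\Gamma,0}\) using \eqref{eq:Rchi-fredholm-section}. When \(\chi=0\), the phase reduces to \(\phi_{\Gamma,0}(\gamma)=\kappa_\Gamma(\widetilde\gamma)\). Lemma~\ref{lem:lift-character-product-convergence} shows that both triple products converge absolutely. Each factor is bounded below by \((1-q_0)^2/(1-q_0)^2>0\) after normalization, so both products are nonzero. We may therefore divide the two products factor by factor. The common denominators \((1-q_{\gamma,m,n})^2\) cancel, which gives \eqref{eq:relative-product-explicit}.

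\emph{Step 3: the logarithmic series.} Subtract the two absolutely convergent series \eqref{eq:log-Rchi-absolute-series} for \(\chi\) and for \(0\). The constant terms \(1/r\) cancel, which yields the first line of \eqref{eq:relative-log-corrected}. Absolute convergence of each series justifies subtracting them termwise.

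For the second line, use the fact that \(\kappa_\Gamma\in\{0,\pi\}\). This gives \(\cos(rm\kappa_\Gamma)=\varepsilon_\Gamma(\gamma)^{mr}\) and \(\sin(rm\kappa_\Gamma)=0\). The angle-addition formula then gives
\[
\cos\bigl(rm(\kappa_\Gamma+\chi)\bigr)=\varepsilon_\Gamma(\gamma)^{mr}\cos\bigl(rm\chi(\widetilde\gamma)\bigr).
\]
Hence the numerator equals \(\varepsilon_\Gamma(\gamma)^{mr}\bigl(1-\cos(mr\chi(\widetilde\gamma))\bigr)\), as claimed.

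\emph{Step 4: the sign remark.} The series need not be non-negative because of the factor \(\varepsilon_\Gamma^{mr}\), which can be negative. The cleanest justification is a simple case: when every primitive lift has negative trace, the terms with odd \(mr\) are non-positive. This is enough to show that positivity is not automatic, so no proof of a definite sign is required.

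The only step needing care is the convergence justification in Step 3, namely that both series converge absolutely and uniformly in \(\chi\). This is exactly the majorant already established in Lemma~\ref{lem:lift-character-product-convergence}, so no real obstacle is expected.
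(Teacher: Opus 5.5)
Your Steps~1--3 are correct and coincide with the paper's proof. The paper divides Theorem~\ref{thm:main-twisted-det} by its \(\chi=0\) case, so that \(D_{\mathrm{base}}(M)\) cancels. It then uses the absolute convergence from Lemma~\ref{lem:lift-character-product-convergence} to take the quotient of the products, and the difference of the series \eqref{eq:log-Rchi-absolute-series}, term by term. You make explicit the cancellations and the identities \(\cos(rm\kappa_\Gamma(\widetilde\gamma))=\varepsilon_\Gamma(\gamma)^{mr}\) and \(\sin(rm\kappa_\Gamma(\widetilde\gamma))=0\). (In Step~2 the clean statement is \(1-2q\cos\alpha+q^2\geq(1-q)^2>0\), so every factor is \(\geq1\) and no denominator vanishes.)

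The only problem is Step~4, which does not work as written. The configuration you use, in which every primitive lift has negative trace, never occurs. For \(A,B\in\SL(2,\R)\) one has \(\Tr(AB)+\Tr(AB^{-1})=\Tr(A)\Tr(B)\). If \(A,B\in\Gamma\) are the lifts of two standard generators \(a_1,b_1\) of \(\bar\Gamma\), then \(A\), \(B\), \(AB\), \(AB^{-1}\) are the lifts of the primitive classes \(a_1,b_1,a_1b_1,a_1b_1^{-1}\), since their images in \(H_1(M;\Z)\) are primitive vectors. So their traces cannot all be negative. Moreover, producing non-positive terms would not show that the sum can be negative.

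The paper's proof merely asserts this sign remark. What you can actually justify is that the series has no termwise sign. For this you need one primitive class with \(\varepsilon_\Gamma(\gamma)=-1\) and \(\chi(\widetilde\gamma)\not\equiv0 \bmod 2\pi\); its \(m=r=1\) term is then strictly negative. For non-trivial \(\chi\) such a class exists for every lift:
\begin{itemize}
\item \(D=[\widetilde a_1,\widetilde b_1]\) is primitive, being represented by a simple separating curve, and satisfies \(\Tr D<-2\) and \(\chi(D)=0\).
\item Let \(C\) be a generator with \(\chi(C)\neq0\). Either \(\Tr C<0\), or \(\Tr(CD)+\Tr(CD^{-1})=\Tr C\,\Tr D<0\).
\item In the second case one of \(CD^{\pm1}\) has negative trace. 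Both are primitive, with homology class \([C]\), and \(\chi(CD^{\pm1})=\chi(C)\).
\end{itemize}
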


\begin{proof}
Divide Theorem~\ref{thm:main-twisted-det} by its \(\chi=0\) case.  The
absolute convergence proved above justifies taking the quotient term by
term.  Unlike the zero-holonomy-normalized absolute factor, this relative
product need not be greater than or equal to one.
\end{proof}

\begin{remark}[Consistency under a change of lift]
Let \(\epsilon:\bar\Gamma\to\{\pm1\}\) be a sign character and form the
new lift
\[
        \Gamma_\epsilon
        :=\{\epsilon(\gamma)\widetilde\gamma:
        \gamma\in\bar\Gamma\}.
\]
Write \(\eta_\epsilon(\gamma)=0\) or \(\pi\) according as
\(\epsilon(\gamma)=1\) or \(-1\), and define the compensating character
on \(\Gamma_\epsilon\) by
\[
\chi_\epsilon(\epsilon(\gamma)\widetilde\gamma)
:=\chi(\widetilde\gamma)-\eta_\epsilon(\gamma).
\]
Then the actions defining \(P^{\Gamma,\chi}\) and
\(P^{\Gamma_\epsilon,\chi_\epsilon}\) coincide.  Moreover,
\[
\kappa_{\Gamma_\epsilon}(\epsilon(\gamma)\widetilde\gamma)
+\chi_\epsilon(\epsilon(\gamma)\widetilde\gamma)
=\kappa_\Gamma(\widetilde\gamma)+\chi(\widetilde\gamma)
\quad\text{mod }2\pi.
\]
Thus the corrected theta factor, closed-geodesic product, and determinant
are invariant under this simultaneous change.  This is the consistency
that fails if the central phase is omitted.
\end{remark}

\subsection{The universal constant}
\label{subsec:universal-identity-constant}

To conclude the paper we give a formula for the universal constant that enters into the determinant formula.

\begin{proposition}
\label{prop:C-id-explicit}
With \(C_{\mathrm{id}}\) defined by \eqref{eq:C-id-definition}, one has
\begin{align}
C_{\mathrm{id}}
&=
-\frac{\pi}{256}(3+\log 16)                                      \notag\\
&\quad
-\frac{1}{4\pi}
\sum_{k=1}^{\infty}
\sum_{n=1}^{\infty}
(-1)^{n-1}
\frac{n}{k(k+n)}
K_2\!\left(2\pi\sqrt{k(k+n)}\right)                              \notag\\
&\quad
+\frac{1}{\pi}
\sum_{q=1}^{\infty}
\frac{1}{q}
\int_0^\infty
\lambda\tanh(\pi\lambda)
\exp\!\left(-4\pi q\sqrt{\lambda^2+\frac14}\right)
\,d\lambda                                                       \notag\\
&\quad
+\frac{1}{12}
+4\sum_{j=1}^{\infty}
(-1)^{j-1}
\int_0^\infty
\lambda\sqrt{\lambda^2+\frac14}\,
\ee^{-2\pi j\lambda}
\,d\lambda .
\label{eq:C-id-explicit}
\end{align}
Here \(K_\nu\) denotes the modified Bessel function of the second kind.
\end{proposition}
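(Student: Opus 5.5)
The plan is to compute \(\zeta_{\mathrm{id}}(s)\) term by term. I would split \(a_{\mathrm{id}}(t)\) from \eqref{eq:a-id-definition} into four pieces whose Mellin transforms can be handled separately:
\begin{itemize}
\item[(i)] the \(k=0\) term of \(\mathcal I_0(t)/\Vol(M)\);
\item[(ii)] the \(k\neq0\) terms of \(\mathcal I_0(t)/\Vol(M)\);
\item[(iii)] the Poisson modes \(q\neq0\) of \(\vartheta(t)\mathcal I_M(t)/\Vol(M)\), obtained from \eqref{eq:theta-poisson};
\item[(iv)] the Poisson mode \(q=0\) of \(\vartheta(t)\mathcal I_M(t)/\Vol(M)\).
\end{itemize}
Pieces (ii) and (iii) are exponentially small as \(t\downarrow0\). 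For these, the bracket multiplying \(1/\Gamma(s)\) is holomorphic at \(0\), so their contribution to \(-\zeta_{\mathrm{id}}'(0)\) is just minus the integral \(\int_0^\infty t^{-1}(\cdot)\,dt\). Pieces (i) and (iv) require explicit continuation.

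For (i), the Mellin transform of \(\frac{\pi}{4}t^{-2}\ee^{-t/4}\) is
\[
\frac{\pi}{4}4^{2-s}\frac{\Gamma(s-2)}{\Gamma(s)}=\frac{\pi\,4^{-s}}{(s-1)(s-2)}.
\]
Its value at \(0\) is \(\pi/2\), which is the (i)-contribution to \(\zeta_{\mathrm{id}}(0)\). The logarithmic derivative at \(0\) is \(-\log4+\frac32\). This gives an elementary closed-form constant of the shape \(-\frac{\pi}{c}(\text{const}+\log 16)\), but with the \(4^{-s}\) from \(\Gamma(s-2)\) the log term enters as \(-\log4\). Matching the displayed first term \(-\frac{\pi}{256}(3+\log16)\) exactly will therefore require re-checking the \(4^{\pm s}\) bookkeeping (and whether the displayed term is really the bare \(k=0\) Mellin term) against the printed normalization.

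For (ii), I would expand
\[
\cosh^{-2}x=4\sum_{n\ge1}(-1)^{n-1}n\,\ee^{-2nx},\qquad x=2|k|\pi^2/t.
\]
Each term then becomes \(t^{-2}\ee^{-t/4}\exp(-4\pi^2k(k+n)/t)\). The integral \(\int_0^\infty t^{-3}\ee^{-t/4-a/t}\,dt=2(b/a)K_2(2\sqrt{ab})\) with \(b=1/4\) and \(a=4\pi^2k(k+n)\) produces \(K_2(2\pi\sqrt{k(k+n)})/(k(k+n))\). Pairing \(\pm k\) and collecting constants gives the double Bessel series with prefactor \(-1/(4\pi)\). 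Exchanging the sums with the integral is justified by the exponential decay of \(K_2\).

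For (iii), I would substitute \(\frac{2\sqrt\pi}{\sqrt t}\ee^{-4\pi^2q^2/t}\) into \eqref{eq:base-identity-term}. Applying Fubini and then the \(t^{-3/2}\) integral used in Proposition~\ref{prop:theta-weighted-fredholm}, with \(\beta=\lambda^2+\frac14\), gives
\[
\frac{1}{2\sqrt\pi|q|}\ee^{-4\pi|q|\sqrt{\lambda^2+1/4}}.
\]
With the minus sign from \(a_{\mathrm{id}}\) and the pairing \(q\leftrightarrow -q\), this produces the \(+\frac1\pi\sum_q\frac1q\int\) term; evenness of the integrand halves the integral to \((0,\infty)\).

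For (iv), the Mellin transform is
\[
-\frac{1}{\sqrt\pi}\frac{\Gamma(s-\frac12)}{\Gamma(s)}\int_0^\infty\lambda\tanh(\pi\lambda)\Bigl(\lambda^2+\tfrac14\Bigr)^{\frac12-s}d\lambda,
\]
valid for \(\Re s>3/2\). I would write \(\tanh(\pi\lambda)=1-2\sum_{j\ge1}(-1)^{j-1}\ee^{-2\pi j\lambda}\). The exponential part is entire in \(s\). Since \(\Gamma(s-\frac12)/\Gamma(s)\sim-2\sqrt\pi\,s\), its contribution to \(-\zeta'_{\mathrm{id}}(0)\) is \(4\sum_j(-1)^{j-1}\int\lambda\sqrt{\lambda^2+1/4}\,\ee^{-2\pi j\lambda}d\lambda\). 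The constant part is continued explicitly via
\[
\int_0^\infty\lambda(\lambda^2+\tfrac14)^{\frac12-s}d\lambda=\frac{(1/4)^{\frac32-s}}{2(s-\frac32)},
\]
which at \(s=0\) equals \(-\frac1{24}\). With the \(-2\sqrt\pi s\) factor and the minus sign this yields \(\frac1{12}\).

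The main obstacle is this last step. The zeta value at \(s=0\) must be handled consistently across all pieces: any \(s\)-dependence multiplying a nonvanishing value at \(0\) contributes to the derivative, as happens in (i). I would verify that the continuation of the constant part in (iv) produces no extra logarithmic terms, and that all signs and factors of \(2\) from the pairings match \eqref{eq:C-id-explicit}.
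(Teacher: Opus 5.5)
Your route is the paper's: the same four-piece split, the $\cosh^{-2}$ expansion with the $K_2$ integral, Poisson splitting of $\vartheta$, the $t^{-3/2}$ integral for $q\neq0$, and $\tanh(\pi\lambda)=1-2/(\ee^{2\pi\lambda}+1)$ with explicit continuation of the polynomial part. Pieces (ii)--(iv) are computed correctly, including the $\frac1{12}$.

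\textbf{The gap is piece (i).} You compute it incorrectly, leave the resulting mismatch unresolved, and suggest the displayed term may be mis-normalized. It is not. Since $\int_0^\infty t^{\nu-1}\ee^{-\beta t}\,dt=\beta^{-\nu}\Gamma(\nu)$ with $\beta=\frac14$ and $\nu=s-2$, the scale factor is $4^{s-2}$, not $4^{2-s}$. Separately, your simplification of $\frac\pi4 4^{2-s}$ to $\pi 4^{-s}$ drops a factor $4$. The correct transform is
\[
\frac{1}{\Gamma(s)}\int_0^\infty t^{s-1}\,\frac{\pi}{4}t^{-2}\ee^{-t/4}\,dt
=\frac{\pi}{4}\,4^{s-2}\,\frac{\Gamma(s-2)}{\Gamma(s)}
=\frac{\pi}{64}\,\frac{4^{s}}{(s-1)(s-2)} .
\]
\begin{itemize}
\item Its value at $s=0$ is $\frac{\pi}{128}$. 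This is exactly the coefficient of $\Vol(M)$ in $c_0$ in \eqref{eq:det-heat-coefficients}, i.e.\ the $t^0$ coefficient of $\frac{\pi}{4}t^{-2}\ee^{-t/4}$. Your value $\frac{\pi}{2}$ fails this consistency check.
\item Its logarithmic derivative at $0$ is $\log 4+1+\frac12$, not $-\log4+\frac32$.
\item Hence its derivative at $0$ is $\frac{\pi}{128}\bigl(\frac32+\log 4\bigr)=\frac{\pi}{256}(3+\log 16)$.
\end{itemize}
With $C_{\mathrm{id}}=-\zeta_{\mathrm{id}}'(0)$, this gives exactly the first displayed term $-\frac{\pi}{256}(3+\log16)$. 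No renormalization is needed.

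The step you flag as the main obstacle, (iv), is not problematic. The continuation $\frac{(1/4)^{3/2-s}}{2(s-3/2)}$ is explicit and holomorphic near $s=0$. Because $\Gamma(s-\frac12)/\Gamma(s)=-2\sqrt\pi\,s+O(s^2)$ vanishes at $0$, only the value $-\frac1{24}$ at $s=0$ enters, and no logarithmic terms arise. With (i) corrected, your four contributions assemble to \eqref{eq:C-id-explicit} exactly as in the paper.
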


\begin{proof}
By definition,
\[
        C_{\mathrm{id}}
        =
        -\zeta_{\mathrm{id}}'(0),
\]
where
\[
        \zeta_{\mathrm{id}}(s)
        =
        \frac1{\Gamma(s)}
        \int_0^\infty t^{s-1}a_{\mathrm{id}}(t)\,dt
\]
and
\[
        a_{\mathrm{id}}(t)
        =
        \frac{\mathcal I_0(t)}{\Vol(M)}
        -
        \vartheta(t)\frac{\mathcal I_M(t)}{\Vol(M)}.
\]
Thus
\[
        C_{\mathrm{id}}
        =
        -\zeta_{\mathcal I_0}'(0)
        +
        \zeta_{\vartheta\mathcal I_M}'(0),
\]
where
\[
        \zeta_{\mathcal I_0}(s)
        =
        \frac1{\Gamma(s)}
        \int_0^\infty
        t^{s-1}\frac{\mathcal I_0(t)}{\Vol(M)}\,dt
\]
and
\[
        \zeta_{\vartheta\mathcal I_M}(s)
        =
        \frac1{\Gamma(s)}
        \int_0^\infty
        t^{s-1}
        \vartheta(t)\frac{\mathcal I_M(t)}{\Vol(M)}\,dt.
\]

We first compute the contribution of \(\mathcal I_0\).  From
\eqref{eq:identity-contribution-character-sl2},
\[
\frac{\mathcal I_0(t)}{\Vol(M)}
=
\frac{\pi\ee^{-t/4}}{4t^2}
\sum_{k\in\Z}
\frac{
\exp\!\left(-\frac{4k^2\pi^2}{t}\right)
}{
\cosh^2\!\left(\frac{2|k|\pi^2}{t}\right)
}.
\]
The term \(k=0\) gives
\[
\zeta_{\mathcal I_0}^{(0)}(s)
=
\frac{\pi}{4\Gamma(s)}
\int_0^\infty t^{s-3}\ee^{-t/4}\,dt
=
\frac{\pi}{4}\,4^{s-2}
\frac{\Gamma(s-2)}{\Gamma(s)}.
\]
Since
\[
        \frac{\Gamma(s-2)}{\Gamma(s)}
        =
        \frac1{(s-1)(s-2)},
\]
we get
\[
        \left(\zeta_{\mathcal I_0}^{(0)}\right)'(0)
        =
        \frac{\pi}{256}(3+\log 16).
\]

For \(k\neq0\), use
\[
        \frac1{\cosh^2 x}
        =
        4\sum_{n=1}^{\infty}(-1)^{n-1}n\ee^{-2nx},
        \qquad x>0.
\]
After pairing \(k\) and \(-k\), this gives
\[
\zeta_{\mathcal I_0}^{(\neq0)}(s)
=
\frac{2\pi}{\Gamma(s)}
\sum_{k=1}^{\infty}
\sum_{n=1}^{\infty}
(-1)^{n-1}n
\int_0^\infty
 t^{s-3}
\exp\!\left(
-\frac t4
-\frac{4\pi^2k(k+n)}{t}
\right)\,dt.
\]
Using
\[
        \int_0^\infty t^{\nu-1}\ee^{-\beta t-\gamma/t}\,dt
        =
        2\left(\frac{\gamma}{\beta}\right)^{\nu/2}
        K_\nu(2\sqrt{\beta\gamma}),
        \qquad \beta,\gamma>0,
\]
with
\[
        \nu=s-2,
        \qquad
        \beta=\frac14,
        \qquad
        \gamma=4\pi^2k(k+n),
\]
and using \(1/\Gamma(s)=s+O(s^2)\), we obtain
\[
        \left(\zeta_{\mathcal I_0}^{(\neq0)}\right)'(0)
        =
        \frac1{4\pi}
        \sum_{k=1}^{\infty}
        \sum_{n=1}^{\infty}
        (-1)^{n-1}
        \frac{n}{k(k+n)}
        K_2\!\left(2\pi\sqrt{k(k+n)}\right).
\]
Therefore
\begin{equation}\label{eq:zeta-I0-prime-Cid-proof}
\zeta_{\mathcal I_0}'(0)
=
\frac{\pi}{256}(3+\log 16)
+
\frac1{4\pi}
\sum_{k=1}^{\infty}
\sum_{n=1}^{\infty}
(-1)^{n-1}
\frac{n}{k(k+n)}
K_2\!\left(2\pi\sqrt{k(k+n)}\right).
\end{equation}

We next compute the contribution of
\(\vartheta(t)\mathcal I_M(t)\).  By Poisson summation,
\[
        \vartheta(t)
        =
        2\sqrt{\frac{\pi}{t}}
        \sum_{q\in\Z}
        \exp\!\left(-\frac{4\pi^2q^2}{t}\right).
\]
Using \eqref{eq:base-identity-term}, we get
\[
\zeta_{\vartheta\mathcal I_M}(s)
=
\frac1{2\sqrt\pi\,\Gamma(s)}
\sum_{q\in\Z}
\int_{-\infty}^{\infty}
\lambda\tanh(\pi\lambda)
\int_0^\infty
 t^{s-\frac32}
\exp\!\left(
-t\left(\lambda^2+\frac14\right)
-\frac{4\pi^2q^2}{t}
\right)
\,dt\,d\lambda.
\]

For \(q\neq0\), the expression multiplying \(1/\Gamma(s)\) is
holomorphic at \(s=0\).  Since \(1/\Gamma(s)=s+O(s^2)\), differentiating
at zero and using
\[
        \int_0^\infty t^{-3/2}\ee^{-\beta t-\gamma/t}\,dt
        =
        \frac{\sqrt\pi}{\sqrt\gamma}\ee^{-2\sqrt{\beta\gamma}},
\]
we obtain, after pairing \(q\) and \(-q\) and using the evenness of
\(\lambda\tanh(\pi\lambda)\),
\begin{equation}\label{eq:zeta-theta-IM-nonzero-Cid-proof}
\left(\zeta_{\vartheta\mathcal I_M}^{(\neq0)}\right)'(0)
=
\frac1{\pi}
\sum_{q=1}^{\infty}
\frac1q
\int_0^\infty
\lambda\tanh(\pi\lambda)
\exp\!\left(
-4\pi q\sqrt{\lambda^2+\frac14}
\right)
\,d\lambda.
\end{equation}

It remains to compute the \(q=0\) contribution.  It is
\[
\zeta_{\vartheta\mathcal I_M}^{(0)}(s)
=
\frac1{2\sqrt\pi}
\frac{\Gamma\left(s-\frac12\right)}{\Gamma(s)}
\int_{-\infty}^{\infty}
\lambda\tanh(\pi\lambda)
\left(\lambda^2+\frac14\right)^{\frac12-s}
\,d\lambda,
\]
understood by meromorphic continuation.  Since the integrand is even,
\[
\int_{-\infty}^{\infty}
\lambda\tanh(\pi\lambda)
\left(\lambda^2+\frac14\right)^{\frac12-s}
\,d\lambda
=
2\int_0^\infty
\lambda\tanh(\pi\lambda)
\left(\lambda^2+\frac14\right)^{\frac12-s}
\,d\lambda.
\]
Using
\[
        \tanh(\pi\lambda)
        =
        1-\frac{2}{\ee^{2\pi\lambda}+1},
        \qquad \lambda>0,
\]
we split the last integral into a polynomial part and an exponentially
decaying part.

The polynomial part is, by analytic continuation,
\[
2\int_0^\infty
\lambda\left(\lambda^2+\frac14\right)^{\frac12-s}
\,d\lambda
=
\frac{(1/2)^{3-2s}}{s-\frac32}.
\]
Hence
\[
\left.
\frac{d}{ds}
\right|_{s=0}
\left[
\frac1{2\sqrt\pi}
\frac{\Gamma\left(s-\frac12\right)}{\Gamma(s)}
\frac{(1/2)^{3-2s}}{s-\frac32}
\right]
=
\frac1{12}.
\]

For the exponentially decaying part, expand
\[
        \frac1{\ee^{2\pi\lambda}+1}
        =
        \sum_{j=1}^{\infty}(-1)^{j-1}\ee^{-2\pi j\lambda}.
\]
Since
\[
\frac1{2\sqrt\pi}
\frac{\Gamma\left(s-\frac12\right)}{\Gamma(s)}
=
-s+O(s^2),
\qquad s\to0,
\]
this part contributes
\[
4\sum_{j=1}^{\infty}
(-1)^{j-1}
\int_0^\infty
\lambda
\sqrt{\lambda^2+\frac14}\,
\ee^{-2\pi j\lambda}
\,d\lambda.
\]
Equivalently, for \(a,b>0\), one may use the Laplace transform
\[
        \int_0^\infty
        \ee^{-a\lambda}\sqrt{\lambda^2+b^2}\,d\lambda
        =
        \frac{\pi b}{2a}
        \left(\mathbf H_1(ab)-Y_1(ab)\right),
\]
where \(\mathbf H_1\) is the ordinary Struve function and \(Y_1\) is the
Bessel function of the second kind.  Thus
\[
\int_0^\infty
\lambda\sqrt{\lambda^2+b^2}\,
\ee^{-a\lambda}\,d\lambda
=
-\frac{\partial}{\partial a}
\left[
        \frac{\pi b}{2a}
        \left(\mathbf H_1(ab)-Y_1(ab)\right)
\right].
\]
We keep the integral form in \eqref{eq:C-id-explicit}.  Therefore
\begin{equation}\label{eq:zeta-theta-IM-zero-Cid-proof}
\left(\zeta_{\vartheta\mathcal I_M}^{(0)}\right)'(0)
=
\frac1{12}
+
4\sum_{j=1}^{\infty}
(-1)^{j-1}
\int_0^\infty
\lambda\sqrt{\lambda^2+\frac14}\,
\ee^{-2\pi j\lambda}
\,d\lambda.
\end{equation}

Combining
\eqref{eq:zeta-I0-prime-Cid-proof},
\eqref{eq:zeta-theta-IM-nonzero-Cid-proof}, and
\eqref{eq:zeta-theta-IM-zero-Cid-proof}, and using
\[
        C_{\mathrm{id}}
        =
        -\zeta_{\mathcal I_0}'(0)
        +
        \zeta_{\vartheta\mathcal I_M}'(0),
\]
gives \eqref{eq:C-id-explicit}.
\end{proof}

\begin{remark}
   Numerically, one obtains
\[
C_{\mathrm{id}} \simeq 0.06536605482497744.
\]
\end{remark}


\begin{thebibliography}{99}

\bibitem{BauerFurutani}
W.~Bauer and K.~Furutani,
\emph{Spectral zeta function of a sub-Laplacian on product
sub-Riemannian manifolds and zeta-regularized determinant},
J. Geom. Phys. \textbf{60} (2010), 1209--1234.

\bibitem{BauerFurutaniIwasaki}
W.~Bauer, K.~Furutani and C.~Iwasaki,
\emph{Spectral zeta function of the sub-Laplacian on two step
nilmanifolds},
J. Math. Pures Appl. \textbf{97} (2012), 242--261.

\bibitem{BauerFurutaniIwasakiPseudoH}
W.~Bauer, K.~Furutani and C.~Iwasaki,
\emph{Spectral zeta function on pseudo H-type nilmanifolds},
Indian J. Pure Appl. Math. \textbf{46} (2015), 539--582.

\bibitem{BaudoinDemni}
F.~Baudoin and N.~Demni,
\emph{Integral representation of the sub-elliptic heat kernel on the
complex anti-de Sitter fibration},
Arch. Math. \textbf{111} (2018), 399--406.

\bibitem{Bonnefont}
M.~Bonnefont,
\emph{The subelliptic heat kernel on $\SL(2,\mathbb R)$ and on its
universal covering: integral representations and some functional
inequalities},
Potential Anal. \textbf{36} (2012), 275--300.

\bibitem{ChoiTakhtajan}
C.~Choi and L.~A.~Takhtajan,
\emph{Supersymmetry and trace formulas II. Selberg trace formula},
arXiv:2306.13636v2 (2025).

\bibitem{DHP}
E.~D'Hoker and D.~H.~Phong,
On determinants of Laplacians on Riemann surfaces,
\emph{Communications in Mathematical Physics} \textbf{104} (1986),
537--545.

\bibitem{Hejhal}
D.~A. Hejhal,
\emph{The Selberg Trace Formula for $PSL(2,\mathbb R)$, Vols. I--II},
Lecture Notes in Mathematics, Vols. 548 and 1001, Springer, 1976 and 1983.

\bibitem{IkedaMatsumoto}
N.~Ikeda and H.~Matsumoto,
\emph{Brownian motion on the hyperbolic plane and Selberg trace formula},
J. Funct. Anal. \textbf{163} (1999), 63--110.

\bibitem{McKean1972}
H.~P. McKean,
Selberg's trace formula as applied to a compact Riemann surface,
\emph{Comm. Pure Appl. Math.} \textbf{25} (1972), 225--246.
MR0473166.

\bibitem{McKean1974}
H.~P. McKean,
Correction to: ``Selberg's trace formula as applied to a compact Riemann surface''
(Comm. Pure Appl. Math. \textbf{25} (1972), 225--246),
\emph{Comm. Pure Appl. Math.} \textbf{27} (1974), 134.
MR0473167.

\bibitem{OsgoodPhillipsSarnak1988}
B.~Osgood, R.~Phillips and P.~Sarnak,
\newblock Extremals of determinants of Laplacians,
\newblock {\em Journal of Functional Analysis} {\bf 80} (1988), no.~1,
148--211.

\bibitem{Patterson}
S.~J. Patterson,
\emph{The Selberg zeta-function of a Kleinian group},
in Number Theory, Trace Formulas and Discrete Groups,
Academic Press, 1989.

\bibitem{PongeHeisenberg}
R.~Ponge,
\emph{Heisenberg calculus and spectral theory of hypoelliptic operators
on Heisenberg manifolds},
Mem. Amer. Math. Soc. \textbf{194} (2008), no.~906.

\bibitem{PongeResidue}
R.~Ponge,
\emph{Noncommutative residue for Heisenberg manifolds. Applications in
CR and contact geometry},
J. Funct. Anal. \textbf{252} (2007), 399--463.

\bibitem{RaySinger1971}
D.~B. Ray and I.~M. Singer,
\newblock R-torsion and the Laplacian on Riemannian manifolds,
\newblock {\em Advances in Mathematics} {\bf 7} (1971), 145--210.

\bibitem{RuminSeshadri}
M.~Rumin and N.~Seshadri,
\emph{Analytic torsions on contact manifolds},
Ann. Inst. Fourier \textbf{62} (2012), 727--782.

\bibitem{Sarnak1987}
P.~Sarnak,
\newblock Determinants of Laplacians,
\newblock {\em Communications in Mathematical Physics} {\bf 110} (1987),
113--120.

\bibitem{Wang}
J.~Wang,
\emph{The subelliptic heat kernel on the anti-de Sitter space},
Potential Anal. \textbf{45} (2016), 635--653.
\end{thebibliography}
\end{document}